\newtheorem{theorem}{Theorem}[section]
\newtheorem{lemma}[theorem]{Lemma}
\newtheorem{definition}[theorem]{Definition}
\newtheorem{assumption}[theorem]{Assumption}
\newtheorem{problem}[theorem]{Problem}
\newtheorem{remark}[theorem]{Remark}
\numberwithin{equation}{section}
\crefname{subsection}{Subsection}{Subsections}
\crefname{figure}{Figure}{Figures}
\newcommand{\ddt}{\frac{\dd}{\dd\mathrm{t}}}
\crefname{theorem}{Theorem}{Theorems}
\crefname{definition}{Definition}{Definitions}
\crefname{lemma}{Lemma}{Lemmas}
\crefname{assumption}{Assumption}{Assumptions}
\crefname{problem}{Problem}{Problems}
\crefname{experiment}{Experiment}{Experiments}
\crefname{remark}{Remark}{Remarks}
\crefname{comment}{Comment}{Comments}
\newcommand{\citep}{\cite}
\DeclarePairedDelimiter{\norm}{\|}{\|}
\crefname{subsection}{Subsection}{Subsections}
\crefname{figure}{Figure}{Figures}
\newcommand{\ff}{\bm{f}}
\newcommand{\OT}{{\Omega_T}}
\renewcommand{\O}{{\Omega}}
\renewcommand{\rho}{\varrho}
\def\Th{\mathcal{T}_h}
\def\Itau{\mathcal{I}_\tau}
\def\phm{\phantom{$-$}}
\def\la{(}
\def\ra{)}
\def\Vh{\mathcal{V}_h}
\def\Qh{\mathcal{Q}_h}
\def\Xh{\mathcal{X}_h}
\def\dtau{d^{n+1}_\tau}
\def\phm{\phantom{-}}
\DeclarePairedDelimiter{\snorm}{|}{|}
\newcommand\restr[2]{{
		\left.\kern-\nulldelimiterspace 
		#1 
		\vphantom{\big|} 
		\right|_{#2} 
}}
\newcommand\restrb[3]{{
		\left.\kern-\nulldelimiterspace 
		#1 
		\vphantom{\big|} 
		\right|_{#2}^{#3} 
}}
\newcommand{\eps}{\varepsilon}
\newcommand{\con}{\hookrightarrow}
\newcommand{\com}{%
	\mathrel{\mathrlap{{\mspace{4mu}\lhook}}{\hookrightarrow}}}
\newcommand{\dd}{\textup{d}}
\newcommand{\dx}{\,\textup{d}x}
\newcommand{\dt}{\,\textup{d}t}
\newcommand{\ds}{\,\textup{d}s}
\renewcommand{\ddt}{\frac{\dd}{\dd t}}
\newcommand{\R}{\mathbb{R}}
\newcommand{\N}{\mathbb{N}}
\newcommand{\p}{\partial}
\newcommand{\pt}{\p_t}
\newcommand{\num}{\textup{num}}
\newcommand{\av}{\textup{av}}
\renewcommand{\div}{\textup{div}}
\DeclareFontFamily{U}{bbold}{}
\DeclareFontShape{U}{bbold}{m}{n}
{
	<-5.5> s*[1.069] bbold5
	<5.5-6.5> s*[1.069] bbold6
	<6.5-7.5> s*[1.069] bbold7
	<7.5-8.5> s*[1.069] bbold8
	<8.5-9.5> s*[1.069] bbold9
	<9.5-11> s*[1.069] bbold10
	<11-15> s*[1.069] bbold12
	<15-> s*[1.069] bbold17
}{}
\def\signed #1{{\leavevmode\unskip\nobreak\hfil\penalty50\hskip2em
		\hbox{}\nobreak\hfil(#1)%
		\parfillskip=0pt \finalhyphendemerits=0 \endgraf}}
\newsavebox\mybox
\def\vv{{\bm{v}}}
\def\ww{\bm{w}}
\def\u{\bm{u}}
\renewcommand{\bar}[1]{\mkern 1.5mu\overline{\mkern-1.5mu#1\mkern-1.5mu}\mkern 1.5mu}
\newcommand{\nvec}{\bm{n}}
\newcommand{\Div}{\ensuremath{\mathrm{div}}}
\newcolumntype{M}[1]{>{\centering\arraybackslash}m{#1}}
\begin{document}
\title{Analysis and structure-preserving approximation of a Cahn--Hilliard--Forchheimer system with solution-dependent mass and volume source}
\author{Aaron Brunk}\address{Institute of Mathematics, Johannes Gutenberg University, Mainz, Germany}
\author{Marvin Fritz}\address{Computational Methods for PDEs, Johann Radon Institute for Computational and Applied Mathematics, Linz, Austria}
%
%
\begin{abstract} We analyze a coupled Cahn--Hilliard--Forchheimer system featuring concentration-dependent mobility, mass source and convective transport. The velocity field is governed by a generalized quasi-incompressible Forchheimer equation with solution-dependent volume source. We impose Dirichlet boundary conditions for the pressure to accommodate the source term. Our contributions include a novel well-posedness result for the generalized Forchheimer subsystem via the Browder--Minty theorem, and existence of weak solutions for the full coupled system established through energy estimates at the Galerkin level combined with compactness techniques such as Aubin--Lions' lemma and Minty's trick. Furthermore, we develop a structure-preserving discretization using Raviart--Thomas elements for the velocity that maintains exact mass balance and discrete energy-dissipation balance, with well-posedness demonstrated through relative energy estimates and inf-sup stability. Lastly, we validate our model through numerical experiments, demonstrating optimal convergence rates, structure preservation, and the role of the Forchheimer nonlinearity in governing phase-field evolution dynamics. \end{abstract}
\subjclass{35A01, 35A02, 35D30, 35Q92.}
\date{\hphantom{.}}
\keywords{Cahn--Hilliard--Forchheimer system; Weak solutions; Numerical analysis; Minty’s trick; Monotone operators; Relative energy estimates; Structure-preserving discretization; Fully discrete scheme; Inf-sup stability}
\mbox{}\vspace{-1cm}

\maketitle

\section{Introduction}
The mathematical modeling of tumor growth has evolved significantly, with the foundational “four-species model'' accounting for the volume fractions of cancer cells, healthy cells, and nutrient-rich and nutrient-poor extracellular water \cite{hawkins2012numerical}. The mathematical properties of this model, particularly its well-posedness, have been extensively studied \cite{garcke2017analysis,garcke2017well}. In tumor growth modeling, cells are typically represented as viscous, inertia-free fluids, with the fluid mixture's velocity modeled in a volume-averaged sense, an appropriate assumption given the dense packing of cells. Various approaches have been developed to incorporate fluid movement, such as Darcy's law  \cite{garcke2016global,garcke2016cahn,garcke2018cahn} and the Brinkman equation \cite{ebenbeck2019analysis,ebenbeck2019cahn}.

The Forchheimer equation itself has been extensively analyzed \cite{audu2018well,pan2012mixed,hu2023well,knabner2014mathematical,knabner2016sovability,fabrie1989regularity}. Forchheimer's law, as reviewed in \cite{whitaker1996forchheimer}, represents an advancement over Darcy's law by accounting for quadratic behavior in the pressure drop versus flow rate relationship. This equation can be derived through mixture theory \cite{srinivasan2014thermodynamic} and is particularly relevant for high-velocity non-Darcy flow scenarios.

Our work advances the field in several key aspects. While previous studies coupled the Cahn--Hilliard equation with flow models, we demonstrate that the time-differential term and the Brinkman contribution in the unsteady Forchheimer--Brinkman formulation as in \cite{fritz2019unsteady} are analytically unnecessary through a monotone operator approach and Minty's trick. We extend the analysis to the quasi-incompressible case and generalize the nonlinear term to $|u|^{s-1}u$ for $s>1$, encompassing the typical Forchheimer law ($s=2$) as a special case. While proving the existence of weak solutions to the Cahn--Hilliard--Forchheimer system, we derive a novel well-posedness result to the sole Forchheimer equation with natural boundary conditions.

We develop a fully discrete scheme using Raviart--Thomas elements for velocity approximation, discontinuous linear elements for the pressure, and continuous linear elements for the phase-field and chemical potential, inheriting essential geometric structures from the continuum model. Following the framework for structure-preserving PDE discretizations \cite{Egger2019StructureProblems}, our scheme exactly preserves the mass balance through compatible flux discretization and maintains a discrete energy-dissipation balance via careful treatment of nonlinear coupling terms. This extends recent advances in energy-stable phase-field simulations \cite{brunk2025structure-preserving,brunk2023second-order,brunk2024variational} to quasi-incompressible flow regimes. Building on inf-sup stable mixed formulations, we establish the unique solvability of our nonlinear discrete system through relative energy estimates similar to \cite{brunk2023stability}.

The outline of the article is as follows: In \cref{Sec:Prelim}, we establish the notation and present the coupled Cahn--Hilliard--Forchheimer system, including references to foundational Forchheimer analyses. A refined well-posedness result for the decoupled Forchheimer subsystem is detailed in \cref{Appendix}. \cref{Sec:Analysis} develops the existence theory for weak solutions through a Galerkin approximation scheme with energy estimates, emphasizing the challenges of nonlinear coupling. \cref{Sec:Appro} introduces the structure-preserving discrete formulation using Raviart--Thomas elements, proving: (i) exact mass balance, (ii) energy-dissipation balance, and (iii) unique solvability via relative energy estimates. Finally, \cref{Sec:Simul} demonstrates numerical simulations quantifying Forchheimer's nonlinear effects on the phase-field's morphology.

\section{Preliminaries} \label{Sec:Prelim}

\subsection{Notation}
We consider a bounded domain $\Omega \subset \mathbb{R}^d$, where $d\in\{2,3\}$, and a fixed time $T > 0$. Furthermore, the space-time domain is denoted by $\Omega_T:= \Omega \times (0,T)$, and we use the notation $(f,g)_\Omega=\int_\Omega f g \dx$ and denote the mean of $f:\Omega \to \R$ by $f_\Omega$, that is, $f_\Omega=\frac{1}{|\O|} (1,f)_\Omega$. Similarly, we denote $(f,g)_{\partial\Omega}=\int_{\partial\Omega} f g \mathrm{d}S$, $(f,g)_{(0,T)}=\int_{(0,T)} f g \;\mathrm{d}t$ and $(f,g)_{\Omega_T}=\int_{\Omega\times(0,T)} f g \dx\mathrm{d}t$. The dual pairing between two $X$ and $X'$ is denoted by $\langle a,b\rangle_{X\times X'}$ and the subscript can be neglected if the spaces for pairings are clear.
We write $x\lesssim y$ for $x\leq Cy$ when the generic constant $C$ does not contain any relevant parameters.

\subsection{Modeling}

In the following, we consider a Cahn--Hilliard--Forchheimer system in tumor growth, that is,	
\begin{equation}\label{Eq:CHF}\begin{aligned}
		\div\, \vv &= \Gamma_\vv(\phi), \\
		\nabla p + \alpha(\phi) \vv + \beta(\phi)|\vv|^{s-1} \vv  &= \mu\nabla \phi, \\
		\pt \phi + \div(\phi \vv) &= \div(m(\phi)\nabla \mu) + \Gamma_\phi(\phi), \\
		\mu &=  \Psi'(\phi)-\eps^2 \Delta \phi,
\end{aligned}\end{equation}
where $\vv$ denotes the volume-averaged velocity of the mixture, $p$  the associated pressure, $\phi \in [0, 1]$ the difference in volume fractions, with $\{x:\phi(x) = 1\}$ representing the unmixed tumor tissue, and $\{x:\phi(x) = 0\}$ representing the surrounding healthy tissue, and finally, $\mu$ denotes the chemical potential for $\phi$. The constant $\eps > 0$ is related to the thickness of the diffuse interface. The function $m(\cdot)$ is the mobility of the phase variable $\phi$, $\alpha(\cdot)$ the tumor-dependent permeability of the fluid, $\beta(\cdot)$ the Forchheimer influence of the fluid, $\Gamma_\vv(\cdot)$ the compressibility of the fluid and $\Gamma_\phi(\cdot)$ the source function of the tumor variable.

In this work we consider the follow initial and boundary conditions:
\begin{equation}\label{eq:boundary_condition}
	\begin{aligned}
		[(1-\gamma)\vv - \nabla \mu] \cdot \nvec &= \nabla \phi \cdot \bm{n}= 0 &&\text{ on } \partial \Omega \times (0, T), \quad \gamma\in\{0,1\}, \\
		p &= 0 &&\text{ on } \partial \Omega \times (0, T), \\
		\phi(0) &= \phi_0 &&\text{ in } \Omega.
	\end{aligned}
\end{equation}
Alternatively, the boundary condition on $p$ can be replaced by the Neumann boundary condition for $v$, that is, $\vv \cdot \bm{n}= 0$ on $\partial\Omega$. This implies a compatibility condition $(\Gamma_\vv,1)_\Omega=0$, which a solution-dependent source term $\Gamma_\vv$ cannot, in
general, fulfill.
The Dirichlet boundary condition on $p$ appears as a natural condition and the Neumann boundary condition on $\vv$ appears as an essential condition here. It is contrary to the primal mixed element, under which the Dirichlet boundary condition seems to be an essential condition and the Neumann boundary condition seems as a natural condition.

We emphasize that in the proof of existence for weak solutions, we consider the case of $\gamma=1$, while the numerical scheme and all related results hold for both cases, that is, $\gamma\in\{0,1\}$. Hence, it is worth discussing the difference between both boundary conditions. The main difference of both boundary conditions manifest in the mass and energy-dissipation balance equation, which reads
\begin{align*}
	\ddt\la \phi,1 \ra &= \la \Gamma_\phi(\phi),1 \ra - \gamma\la \phi,\vv\cdot\nvec\ra_{\partial\Omega}, \\
	\ddt E(\phi) &+ \mathcal{D}_\phi(\mu,\u) = \la \Gamma_\phi(\phi),\mu \ra + \la p,\div(\vv) \ra - \gamma\la \phi\mu,\div(\vv) \ra,
\end{align*}
where the energy and dissipation is given by
\begin{align*}
	\mathcal{E}(\phi):=\frac{\varepsilon^2}{2}\norm{\nabla\phi}_{L^2}^2 + (\Psi(\phi),1)_\Omega, \quad \mathcal{D}_\phi(\mu,\u) := \la m(\phi)\nabla\mu,\nabla\mu \ra + \la (\alpha(\phi)+\beta(\phi)\snorm{\vv}^{s-1})\vv,\vv \ra. 
\end{align*}

We can clearly see that $\gamma=0$ implies that there is no mass flow through the boundary and also no additional force in the energy balance. Although $\gamma=1$ allows for both effects. In the application of tumor growth, the first would restrict to growth of already existing tumors in the domain, while the second would also allow for tumor cells to flow through the boundary.
The drawback of course is that the mass can increase and the order parameter $\phi$ might leave the desired range, i.e., breaking the application context.
We may note that both are compatible with the principle of energy dissipation in absence of forces, i.e. $\Gamma_\phi=\Gamma_\vv=0$, but mass conservation is only given for $\gamma=0.$

We may note that these boundary conditions allow for certain pitfalls. Let us suppose that $\phi=\textrm{const}$ on the whole boundary $\partial\Omega$ and also a small boundary layer around. In this case $\nabla\mu=0$ and the boundary conditions implies
\begin{equation*}
	0 =  \int_{\partial\Omega} \vv\cdot\nvec  = \int_\Omega \div(\vv)= \int_\Omega \Gamma_\vv(\phi).  
\end{equation*}
However, this is in general not true and again imposes certain compatibility conditions on $\Gamma_v(\phi).$ 

\begin{remark} One can consider nutrient evolution and cell attractant effects such as chemotaxis, as done in \cite{fritz2023tumor,fritz2021analysis,fritz2019local}. Then the model can be further extended by an equation for the nutrient density $\sigma$, which couples to the other equations as follows:
	\begin{equation}\label{eq:system}\begin{aligned}
			\div\, \vv &= \Gamma_\vv(\phi,\sigma), \\
			\nabla p + \alpha(\phi) \vv + \beta(\phi) |\vv|^{s-1} \vv  &= (\mu+\chi\sigma)\nabla \phi, \\
			\pt \phi + \div(\phi \vv) &= \div(m(\phi)\nabla \mu) + \Gamma_\phi(\phi,\sigma), \\
			\mu &=  \Psi'(\phi)-\eps^2 \Delta \phi - \chi \sigma, \\
			\pt \sigma + \div(\sigma \vv) &= \div(n(\sigma)\nabla (D\sigma-\chi\phi)) + S_\sigma(\phi,\sigma).
	\end{aligned}\end{equation}
	Here, the nonnegative parameter $\chi$ represents chemotaxis, that is, the attraction between nutrients and tumor cells. The existence of the nutrient-extended system can be shown in the same manner as for the Cahn--Hilliard--Forchheimer system in this work. Our model can be seen as a special case of this nutrient-extended system considering $\chi=0$ and the continuous presence of nutrients $\sigma \equiv 1$.
\end{remark}

\subsection{Forchheimer equation}
We extend the existing literature on the generalized Forchheimer equation in several significant ways. Although previous studies \cite{audu2018well,fabrie1989regularity} focused primarily on the case with a boundary condition of $\vv$ and imposed compatibility conditions on the right-hand side of the divergence equation, we examine the equation with pressure boundary conditions to allow solution-dependent source terms, which is relevant in the application of a tumor growth model. In fact, we study the system
\begin{equation}\label{Eq:Forchheimer}\begin{aligned}
		\beta|\vv|^{s-1} \vv+ \alpha \vv + \nabla p &= \bm{f}&&\text{in } \Omega, \\
		\div \vv &= g &&\text{in } \Omega, \\
		p &= h &&\text{on } \partial\Omega.
\end{aligned}\end{equation}
The pressure boundary case had previously been analyzed only for the case $s=2$ in \cite{knabner2014mathematical,knabner2016sovability} or for $\div \vv \in L^2(\Omega)$ in \cite{kieu2020existence}, but our work generalizes this to arbitrary $s>1$ and $\div \vv=g \in L^{1+s}(\Omega)$. This extension bridges the gap between the classical pressure boundary analysis and the broader $s$-case with velocity boundary conditions.
\begin{figure}[H] \centering 
	\begin{tabular}{ |c||c|c|c|c|c|  }
		\hline
		Ref & $s$ & Boundary & $g$ & $\bm{f}$ & $(\vv,p)$ \\
		\hline
		\cite{fabrie1989regularity} & $(1,\infty)$ & $\vv \cdot \bm{n}|_{\p\Omega} \in W^{s/(1+s),1+s}$  & $L^{1+s}$ & $L^{1+1/s}$ & $L^{1+s} \times W^{1,1+1/s}$ \\  
		\cite{audu2018well} & $(1,2]$ &  $\vv \cdot \bm{n}|_{\p\Omega} \in L^{(1+s)(1-1/d)}$ & $L^{d(1+s)/(d+1+s)}$ & $L^{(1+s)/s}$ & $L^{1+s} \times W^{1,(1+s)/s}$ \\
		\cite{girault2008numerical} & $2$ & $\vv \cdot \bm{n}|_{\p\Omega} \in L^{3(d-1)/d} $ & $L^{3d/(d+3)}$ & $0$ & $L^3 \times W^{1,3/2}$
		\\ \hline 
		\cite{pan2012mixed} & $2$ & $p|_{\p\Omega} \in L^2$ & $L^2$ & $L^2$ & $L^3 \cap L^2_\div \times L^2$ \\ 
		\cite{knabner2016sovability} & $2$ & $p|_{\p\Omega} \in W^{1/3,3/2}$ & $L^3$ & $0$ & $L^3_\div \times L^{3/2}$ \\ 
		\cite{kieu2020existence} & $(1,\infty)$ & $p|_{\p\Omega} \in W^{1/s,s}$ & $L^2$ & $0$ & $L^{1+s} \cap L^2_\div \times L^2$ \\
		here & $(1,\infty)$ & $p|_{\p\Omega}\in W^{1/(s+1),1+1/s}$ & $L^{1+s}$ & $L^{1+1/s}$ & $L^{1+s}_\div \times W^{1+1/s}$ \\ \hline
	\end{tabular}
	\caption{Comparison of results for the generalized compressible Forchheimer system $\beta|\vv|^{s-1}\vv+\alpha \vv+\nabla p = \bm{f}$ and $\div \vv = g$.}
\end{figure}

Naturally, the spatial solution space of a velocity $\vv$, that is governed by Forchheimer's law, is 
$$L^{1+s}_\div(\Omega)=\{\u \in L^{1+s}(\Omega)^d: \div \u \in L^{1+s}(\Omega) \}$$
Since $L^{1+s}_\div(\Omega)$ is a closed subspace of $L^{1+s}(\Omega)^d$, it follows that $L^{1+s}_\div(\Omega)$ is a reflexive Banach space.

\begin{lemma} \label{Lem:Forchheimer} We make the following assumptions:
	\begin{itemize}\itemsep.1em
		\item $\Omega \subset \mathbb{R}^d$, $d \in \{2,3\}$, bounded domain with $C^2$-boundary,
		\item $s\in(1,\infty)$,
		\item $\ff\in L^{1+1/s}(\Omega)^d$, 
		\item $g \in L^{1+s}(\Omega)$, 
		\item $h \in W^{1/(s+1),1+1/s}(\p\Omega)$, \item $\beta,\alpha \in L^\infty(\R)$ with $\beta(x) \geq \beta_0$, $\alpha(x) \geq \alpha_0$ for all $x \in \R$.
	\end{itemize}
	Then there exists a unique weak solution $(\vv, p) \in L_\div^{1+s}(\Omega)^d \times W^{1+1/s}(\Omega)$ to the Forchheimer system \cref{Eq:Forchheimer} 
	satisfying the weak form \cref{mix.form.stat.pr} and the following estimate:
	\begin{equation}
		\|\vv\|_{L_\div^{1+s}} + \|p\|_{W^{1+1/s}} \lesssim \|\ff\|_{L^{1+1/s}} + \|g\|_{L^{1+s}} + \|h\|_{W^{1/(s+1),1+1/s}(\p\Omega)}.
	\end{equation}
\end{lemma}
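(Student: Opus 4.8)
The plan is to peel off the divergence constraint and the boundary datum so that what remains is a single monotone operator equation, to which the Browder--Minty theorem applies. First I would reduce the divergence constraint to a homogeneous one: since $\pO$ is $C^2$ and $g\in L^{1+s}(\O)$, $L^p$-elliptic regularity for $\Delta\psi=g$ in $\O$, $\psi=0$ on $\pO$, yields $\psi\in W^{2,1+s}(\O)$, hence $\vv_g:=\nabla\psi\in W^{1,1+s}(\O)^d\subset L^{1+s}_\div(\O)^d$ with $\div\vv_g=g$ and $\|\vv_g\|_{L^{1+s}_\div}\lesssim\|g\|_{L^{1+s}}$; this is nothing but a bounded right inverse of the divergence, i.e. the inf-sup property of $b(\ww,q):=-(q,\div\ww)_\O$. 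Writing $\vv=\vv_g+\ww$ with $\ww$ ranging over the kernel $Z_0:=\{\bm z\in L^{1+s}_\div(\O)^d:\div\bm z=0\}$, which is a closed, hence reflexive, subspace of $L^{1+s}(\O)^d$, the pressure term disappears from the mixed weak formulation of \cref{Eq:Forchheimer} when tested against $Z_0$, because $(\nabla p,\bm z)_\O=-(p,\div\bm z)_\O+(p,\bm z\cdot\nvec)_{\pO}=(h,\bm z\cdot\nvec)_{\pO}$ is independent of the interior values of $p$. Thus it suffices to find $\ww\in Z_0$ with
\begin{equation*}
	\big(\beta|\vv_g+\ww|^{s-1}(\vv_g+\ww)+\alpha(\vv_g+\ww),\,\bm z\big)_\O=(\ff,\bm z)_\O-(h,\bm z\cdot\nvec)_{\pO}\qquad\text{for all }\bm z\in Z_0.
\end{equation*}

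Next I would verify that the operator $\calA\colon Z_0\to Z_0'$ given by the left-hand side meets the hypotheses of Browder--Minty. It is strictly monotone, since $a\mapsto|a|^{s-1}a$ is strictly monotone on $\R^d$ for $s>1$ and $\alpha\ge\alpha_0>0$; it is bounded and hemicontinuous, because the Nemytskii operator $\vv\mapsto|\vv|^{s-1}\vv$ maps $L^{1+s}(\O)^d$ continuously into $L^{1+1/s}(\O)^d$ (its growth exponent is $s$ and $s\,(1+1/s)=1+s$); and it is coercive: using $\ww=(\vv_g+\ww)-\vv_g$ one obtains $\langle\calA\ww,\ww\rangle\ge\beta_0\|\vv_g+\ww\|_{L^{1+s}}^{1+s}+\alpha_0\|\ww\|_{L^2}^2-C\|\vv_g\|_{L^{1+s}}\|\vv_g+\ww\|_{L^{1+s}}^{s}-C\|\vv_g\|_{L^2}\|\ww\|_{L^2}$, and Young's inequality absorbs the cross terms, leaving $\langle\calA\ww,\ww\rangle\ge c\|\ww\|_{L^{1+s}}^{1+s}-C$. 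The right-hand side is a bounded linear functional on $Z_0$: the volume term by Hölder, and the boundary term because the normal trace maps $L^{1+s}_\div(\O)^d$ continuously into $W^{-1/(s+1),1+s}(\pO)=(W^{1/(s+1),1+1/s}(\pO))'$, precisely the dual of the space containing $h$. Browder--Minty then yields a unique $\ww\in Z_0$, and $\vv:=\vv_g+\ww\in L^{1+s}_\div(\O)^d$ satisfies $\div\vv=g$ and the momentum equation tested against $Z_0$.

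It remains to recover the pressure with the correct boundary value and to close the estimate. Put $\bm G:=\ff-\beta|\vv|^{s-1}\vv-\alpha\vv\in L^{1+1/s}(\O)^d$; the variational identity reads $(\bm G,\bm z)_\O=(h,\bm z\cdot\nvec)_{\pO}$ for all $\bm z\in Z_0$, in particular $(\bm G,\bm z)_\O=0$ for divergence-free $\bm z\in C_c^\infty(\O)^d$. By a classical argument (de~Rham's lemma) there is a $p$, unique up to an additive constant, with $\nabla p=\bm G$; since $\bm G\in L^{1+1/s}(\O)^d$ this gives $p\in W^{1,1+1/s}(\O)$, and $\beta|\vv|^{s-1}\vv+\alpha\vv+\nabla p=\ff$ holds in $L^{1+1/s}(\O)^d$. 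Integrating $\nabla p$ by parts against $\bm z\in Z_0$ and comparing with the identity above yields $(p-h,\bm z\cdot\nvec)_{\pO}=0$ for all $\bm z\in Z_0$; since the normal traces of $Z_0$ exhaust the mean-free functionals on $\pO$ (realize any such functional as the normal derivative of a harmonic potential), $p-h$ is constant on $\pO$, and this constant is removed using the remaining normalization freedom, so $p=h$ on $\pO$. The a priori bound follows by reading $\|\vv\|_{L^{1+s}}$ off the coercivity estimate, using $\|\div\vv\|_{L^{1+s}}=\|g\|_{L^{1+s}}$, bounding $\|\nabla p\|_{L^{1+1/s}}$ from the momentum equation, and closing with the trace estimate $\|p\|_{W^{1,1+1/s}}\lesssim\|\nabla p\|_{L^{1+1/s}}+\|h\|_{W^{1/(s+1),1+1/s}(\pO)}$. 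Uniqueness is immediate: the difference of two solutions has divergence-free velocity, so testing the momentum difference with $\vv_1-\vv_2$ eliminates the pressure, and strict monotonicity together with $\alpha\ge\alpha_0>0$ force $\vv_1=\vv_2$; then $\nabla(p_1-p_2)=0$ with zero boundary trace gives $p_1=p_2$.

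The step I expect to be most delicate is the functional-analytic handling of the pressure boundary condition: identifying $W^{1/(s+1),1+1/s}(\pO)$ simultaneously as the trace space of $W^{1,1+1/s}(\O)$ and as the dual of the normal-trace space of $L^{1+s}_\div(\O)^d$, and then running the de~Rham argument in the non-Hilbertian $L^{1+1/s}$ setting while correctly fixing the additive constant so that $p=h$ (and not merely $p=h+\mathrm{const}$) on $\pO$. The construction of the particular velocity $\vv_g$ with the required integrability likewise rests on the $C^2$-regularity of $\pO$ through $W^{2,1+s}$-elliptic regularity.
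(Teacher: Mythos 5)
Your argument is correct in its overall architecture, but it takes a genuinely different route from the paper. The paper keeps the full mixed saddle-point formulation on $V_\div\times Q$ and makes it amenable to Browder--Minty by \emph{regularization}: it adds penalty forms $c_n(p,q)=\tfrac1n(|p|^{(1-s)/s}p,q)_\Omega$ and $d_n(\vv,\u)=\tfrac1n(|\div\vv|^{s-1}\div\vv,\div\u)_\Omega$ so that the combined operator $\mathcal A_n$ on the product space is monotone, coercive and continuous, solves the regularized problem, derives bounds uniform in $n$ via the inf-sup condition for $b$, and passes to the limit. You instead eliminate the pressure first: construct a particular $\vv_g$ with $\div\vv_g=g$ (via $W^{2,1+s}$ elliptic regularity rather than the Bogovskii-type \cref{Lem:Divergence} the paper uses elsewhere), restrict to the divergence-free kernel $Z_0$ where the pressure drops out, apply Browder--Minty to the reduced monotone operator, and recover $p$ by de~Rham/Ne\v{c}as in $L^{1+1/s}$ together with a characterization of the normal traces of $Z_0$ to pin down $p=h$ on $\pO$. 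Both are legitimate; what each buys is roughly dual. The paper's regularization avoids entirely the non-Hilbertian de~Rham step, the right inverse of the divergence, and the trace-exhaustion argument for $Z_0$ --- the pressure and its boundary value come out of the weak formulation and the limit automatically --- at the cost of a more elaborate approximation-and-limit scheme and the algebra of the penalty terms. Your kernel reduction is the more classical saddle-point strategy and gives a one-shot existence proof on $Z_0$, but it front-loads three auxiliary facts, each requiring boundary regularity: (i) the $W^{2,1+s}$ lift of $g$, (ii) $\nabla p\in L^{1+1/s}\Rightarrow p\in W^{1,1+1/s}(\Omega)$ globally, and (iii) the realization of every mean-free element of $(W^{1/(s+1),1+1/s}(\pO))'$ as the normal trace of a divergence-free $L^{1+s}$ field (e.g.\ via the $L^{1+s}$ Neumann problem), which you correctly identify as the delicate point but state without proof. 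Your identification of the normal-trace dual pairing and your uniqueness argument (testing with $\vv_1-\vv_2\in Z_0$ to kill the pressure, then strict monotonicity) coincide with the paper's.
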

The proof is carried out in \cref{Appendix}. Furthermore, we have the following well-known result for the divergence equation: 
\begin{lemma}[cf.~{\cite[III.3]{galdi2011introduction}}] \label{Lem:Divergence} Let $\Omega \subset \R^d$, $d\geq 2$, be a bounded Lipschitz domain. Then, for every $g \in L^q(\Omega)$, $q \in (1,\infty)$, and $\bm{a} \in W^{1-1/q,q}(\p\Omega)^d$ with $(g,1)_\Omega=(\bm{a},\bm{n})_{\p\Omega}$, there exists a solution $\u \in W^{1,q}(\Omega)$ of the problem
	$$\begin{aligned}
		\div \u&= g &&\text{ in } \Omega, \\
		\u &= \bm{a} &&\text{ on } \p\Omega.
	\end{aligned}$$
	Additionally, it holds
	$$\|\u\|_{W^{1,q}} \lesssim \|g\|_{L^q} + \|\bm{a}\|_{W^{1-1/q,q}(\p\Omega)}.$$
\end{lemma}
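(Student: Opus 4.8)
The plan is to decompose the problem into lifting the boundary datum $\bm{a}$ and solving the resulting homogeneous (zero-trace) divergence equation, the latter being the classical Bogovskii problem. First I would invoke the trace theorem on the bounded Lipschitz domain $\Omega$: the trace map $W^{1,q}(\Omega)^d \to W^{1-1/q,q}(\p\Omega)^d$ is surjective with a bounded linear right inverse, which applied to $\bm{a}$ produces $\bm{w}\in W^{1,q}(\Omega)^d$ with $\bm{w}|_{\p\Omega}=\bm{a}$ and $\norm{\bm{w}}_{W^{1,q}}\lesssim\norm{\bm{a}}_{W^{1-1/q,q}(\p\Omega)}$. Setting $\t g:=g-\div\bm{w}\in L^q(\Omega)$, the divergence theorem together with the hypothesis $(g,1)_\Omega=(\bm{a},\bm{n})_{\p\Omega}$ yields
\begin{equation*}
(\t g,1)_\Omega = (g,1)_\Omega - (\bm{w}\cdot\bm{n},1)_{\p\Omega} = (g,1)_\Omega - (\bm{a},\bm{n})_{\p\Omega} = 0,
\end{equation*}
so that $\t g$ has vanishing mean. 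It then suffices to construct $\bm{z}\in W^{1,q}_0(\Omega)^d$ with $\div\bm{z}=\t g$ and $\norm{\bm{z}}_{W^{1,q}}\lesssim\norm{\t g}_{L^q}$, since $\u:=\bm{w}+\bm{z}$ then satisfies $\div\u=g$ and $\u|_{\p\Omega}=\bm{a}$, and the asserted bound follows from the triangle inequality together with $\norm{\t g}_{L^q}\lesssim\norm{g}_{L^q}+\norm{\bm{w}}_{W^{1,q}}$.

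For the homogeneous problem I would use the Bogovskii operator. On a subdomain star-shaped with respect to an open ball, fix $\theta\in C_c^\infty$ supported in that ball with $\int\theta=1$ and set
\begin{equation*}
(\mathcal{B}f)(x) = \int_\Omega f(y)\,\frac{x-y}{|x-y|^d}\int_{|x-y|}^\infty \theta\Bigl(y+r\tfrac{x-y}{|x-y|}\Bigr)\,r^{d-1}\,\dd r\,\dy.
\end{equation*}
For mean-zero $f$ one verifies $\div\mathcal{B}f=f$ and $\mathcal{B}f\in W^{1,q}_0(\Omega)^d$, and the kernel of $\nabla\mathcal{B}$ splits into a Calder\'on--Zygmund singular part plus a weakly singular remainder, giving $\norm{\nabla\mathcal{B}f}_{L^q}\lesssim\norm{f}_{L^q}$ for every $q\in(1,\infty)$. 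A general bounded Lipschitz domain is a finite union of subdomains each star-shaped with respect to a ball; using a subordinate partition of unity together with a redistribution of mass that keeps every localized right-hand side mean-free, one patches the local solutions into a global $\bm{z}$ obeying the required estimate applied to $\t g$.

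The main obstacle is this last step: the $L^q$-boundedness of $\nabla\mathcal{B}$ rests on Calder\'on--Zygmund singular-integral theory (which is exactly what fails at the endpoints $q\in\{1,\infty\}$, explaining the restriction $q\in(1,\infty)$), and the passage from the star-shaped construction to an arbitrary bounded Lipschitz domain requires arranging the partition-of-unity decomposition so that each localized datum retains vanishing mean. These are precisely the technical cores of Galdi's treatment in \cite[III.3]{galdi2011introduction}, which I would cite for the detailed estimates rather than reproduce in full.
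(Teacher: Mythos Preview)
The paper does not prove this lemma at all; it is stated with a direct citation to \cite[III.3]{galdi2011introduction} and used as a black box. Your sketch is correct and is precisely the standard argument carried out in that reference (trace lifting to reduce to the mean-zero homogeneous case, then Bogovski\u{\i}'s operator on star-shaped pieces glued via a partition of unity), so there is nothing to compare.
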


\section{Existence of weak solutions} \label{Sec:Analysis}

We make the following assumptions:
\begin{assumption} \label{Assumptions} ~\\[-0.3cm]
	\begin{enumerate}[label=\textup{(A\arabic*)}, ref=A\arabic*, leftmargin=1.2cm] \itemsep.1em
		\item  $\eps>0$ is fixed, $s>1$, $\phi_0\in H^1(\Omega)$,
		\item $m,\alpha,\beta \in C^0(\R;\R_{>0})$ satisfy $
		x_0\leq x(t)\leq x_\infty$ with $x_0,x_\infty>0$ where $x \in \{m,\alpha,\beta\}$.
		\item $\Gamma_\vv,\Gamma_\phi \in C^0(\R)\cap L^\infty(\R)$.
		\item $\Psi\in C^2(\R,\R_{\geq 0})$ fulfills $ s^2 - 1\lesssim \Psi(s)\lesssim s^2 +1$, $|\Psi'(s)|\lesssim 1+|s|$ and
		$|\Psi''(s)|\lesssim 1+|s|^q$ for some $q \in [0,4).$
	\end{enumerate}
\end{assumption}

We note that the case of $s=1$ reduced the Cahn--Hilliard--Forchheimer system to the Cahn--Hilliard--Darcy system with mass source that has been studied in \cite{sprekels2021optimal,giorgini2022existence,jiang2015well,garcke2018cahn}.
The assumption on $\Psi$ is fulfilled by the typical quartic potential that is quadratically continued outside of the relevant interval $[0,1]$. This assumption can certainly be further loosened, but this is not the focus of the work. For works on Cahn--Hilliard systems with singular potentials, we refer to \cite{ebenbeck2019cahn,frigeri2017multi}. We now introduce the weak formulation of the Cahn--Hilliard--Forchheimer system \cref{Eq:CHF}.

\begin{definition}[Weak solution of \cref{Eq:CHF}]\label{Def:WeakSol}We call a quadruple $(\phi,\mu,\vv,p)$ a weak solution to \cref{Eq:CHF} if 
	\begin{align*}
		\phi &\in H^1(0,T;(H^1(\Omega))')\cap L^{\infty}(0,T;H^1(\Omega))\cap L^2(0,T;H^3(\Omega)), \\ 
		\mu &\in L^4(0,T;L^2(\Omega)) \cap L^2(0,T;H^1(\Omega)),\\
		\vv&\in L^{s+1}(0,T;L_\div^{s+1}(\Omega)^d),\\ 
		p&\in L^{\sigma'}(0,T;W_0^{1+1/s}(\Omega)), \quad \sigma'>1,
	\end{align*}
	such that 
	\begin{equation*}
		\div(\vv) = \Gamma_\vv(\phi)\text{ a.e. in }\Omega_T, \quad \phi(0)=\phi_0\text{ a.e. in }\Omega,
	\end{equation*}
	and 
	\begin{subequations}
		\label{WFORM_1}
		\begin{align}
			\label{WFORM_1a}0 &= (\beta(\phi)|\vv|^{s-1}\vv,\bm{\xi})_\Omega + (\alpha(\phi) \vv,\bm{\xi})_\Omega   -  (\mu\nabla\phi,\bm{\xi})_\Omega - (p, \div\bm{\xi})_\Omega   ,\\
			\label{WFORM_1b} 0 &= \langle\pt\phi{,}\zeta\rangle  + (m(\phi)\nabla\mu,\nabla\zeta)_\Omega-(\Gamma_\phi(\phi),\zeta)_\Omega + (\nabla \phi \cdot \vv,\zeta)_\Omega + (\phi \Gamma_\vv(\phi),\zeta)_\Omega,   \\
			\label{WFORM_1c} 0 &= -(\mu,\zeta)_\Omega +(\Psi'(\phi),\zeta)_\Omega + \eps^2 (\nabla\phi,\nabla\zeta)_\Omega,
		\end{align}
	\end{subequations}
	for a.e. $t\in(0,T)$ and for all $\bm{\xi}\in L^{s+1}_\div(\Omega)^d$, $\zeta \in H^1(\Omega)$.
\end{definition}

\begin{remark}
	Although not presented here, the proof for the other boundary condition, that is $\gamma=0$, follows the same lines.    
\end{remark}

\begin{theorem}\label{THM_WSOL_1}
	Let $\Omega\subset\R^d,\,d\in\{2,3\},$ be a bounded domain with $C^3$-boundary and assume that \cref{Assumptions} is fulfilled and $\gamma=1$. Then there exists a solution quadruple $(\phi,\mu,\vv,p)$ of \cref{Eq:CHF} in the sense of  \cref{Def:WeakSol}. Furthermore, we have $\phi \in L^4(0,T;H^2(\Omega))\cap L^2(0,T;H^3(\Omega))$ 
	and $\mu \in L^4(0,T;L^2(\Omega)$. In addition the integrated energy inequality holds for almost all $t\in(0,T)$, i.e.
	\begin{equation}\label{Eq:Energy}
		\begin{aligned}
			\mathcal{E}(\phi)(t) + \int_0^t\mathcal{D}(\mu,\vv) \leq \mathcal{E}(\phi)(0) + \int_0^t(\mu,\Gamma_{\phi})_\Omega + (p,\Gamma_{\vv})_\Omega -  (\phi\mu,\Gamma_{\vv})_\Omega
	\end{aligned}\end{equation}
	with dissipation
	\begin{equation*}
		\mathcal{D}(\mu_k,\vv_k):= (m(\phi_k),|\nabla\mu_k|^2)_\Omega +(\beta_k|v_k|^{s-1} \vv_k,\vv_k)_\Omega +  (\alpha_k \vv_k,\vv_k)_\Omega  .
	\end{equation*} 
\end{theorem}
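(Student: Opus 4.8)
The plan is to establish existence via a Faedo--Galerkin approximation in the three "elliptic" variables $\phi$, $\mu$, and the pair $(\vv,p)$, derive uniform a priori bounds from the energy-dissipation structure, and then pass to the limit using Aubin--Lions compactness together with Minty's trick for the Forchheimer nonlinearity. Concretely, I would pick the eigenfunctions $\{w_k\}$ of the Neumann Laplacian (so that $\phi_k$ and $\mu_k$ live in $W_k=\mathrm{span}\{w_1,\dots,w_k\}$), and for each fixed $\phi_k$ solve the decoupled Forchheimer system \cref{Eq:Forchheimer} with $\ff=\mu_k\nabla\phi_k$, $g=\Gamma_\vv(\phi_k)$, $h=0$ using \cref{Lem:Forchheimer}; this yields $(\vv_k,p_k)\in L^{s+1}_\div(\Omega)^d\times W_0^{1+1/s}(\Omega)$ depending continuously (indeed, measurably and with the quantitative bound) on $\phi_k,\mu_k$. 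Plugging this back into the discretized versions of \cref{WFORM_1b} and \cref{WFORM_1c} gives a system of ODEs for the coefficient vector, locally solvable by Carathéodory's theorem; global existence on $(0,T)$ follows once the a priori estimates are in hand.

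The a priori estimates come from testing the Galerkin equations with the natural quantities. Testing \cref{WFORM_1c} discretely with $\pt\phi_k$ and \cref{WFORM_1b} with $\mu_k$, adding, and using \cref{WFORM_1a} tested with $\vv_k$, one reproduces at the discrete level the energy identity
\begin{equation*}
	\ddt\mathcal{E}(\phi_k) + \mathcal{D}(\mu_k,\vv_k) = (\mu_k,\Gamma_\phi(\phi_k))_\Omega + (p_k,\Gamma_\vv(\phi_k))_\Omega - (\phi_k\mu_k,\Gamma_\vv(\phi_k))_\Omega,
\end{equation*}
where the convective term $(\nabla\phi_k\cdot\vv_k,\mu_k)_\Omega$ and the pressure term cancel against the right-hand side of \cref{WFORM_1a} upon using $\div\vv_k=\Gamma_\vv(\phi_k)$ and the product rule $(\phi_k\vv_k)=\phi_k\div\vv_k+\vv_k\cdot\nabla\phi_k$. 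Using $\Gamma_\phi,\Gamma_\vv\in L^\infty$, the coercivity $\alpha,\beta\geq\alpha_0,\beta_0>0$, the quartic growth of $\Psi$, and the continuity of the pressure bound from \cref{Lem:Forchheimer}, the right-hand side is absorbed by Young's and Gagliardo--Nirenberg inequalities (the term $(p_k,\Gamma_\vv(\phi_k))$ is controlled since $\|p_k\|_{W^{1+1/s}}\lesssim\|\mu_k\nabla\phi_k\|_{L^{1+1/s}}+\|\Gamma_\vv(\phi_k)\|_{L^{1+s}}$, and $\|\mu_k\nabla\phi_k\|_{L^{1+1/s}}$ is estimated via Hölder and interpolated into $\|\nabla\mu_k\|_{L^2}$ plus lower-order terms; the exponent $\sigma'>1$ for $p$ in \cref{Def:WeakSol} is dictated by this estimate). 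This gives $\phi_k$ bounded in $L^\infty(0,T;H^1)$, $\nabla\mu_k$ in $L^2(\Omega_T)$, $\vv_k$ in $L^{s+1}(0,T;L^{s+1}_\div)$; the bound on $\mu_k$ in $L^4(0,T;L^2)$ and hence $\phi_k$ in $L^4(0,T;H^2)\cap L^2(0,T;H^3)$ follows by testing \cref{WFORM_1c} with $\mu_k$ and with $-\Delta\phi_k$ and bootstrapping with elliptic regularity, exactly as in the Cahn--Hilliard--Darcy literature; finally comparison in \cref{WFORM_1b} gives $\pt\phi_k$ bounded in $L^2(0,T;(H^1)')$ (or a suitable dual).

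With these bounds, Aubin--Lions--Simon yields (along a subsequence) $\phi_k\to\phi$ strongly in $C(0,T;L^2)\cap L^2(0,T;H^1)$, hence a.e., so $m(\phi_k)\to m(\phi)$, $\alpha(\phi_k)\to\alpha(\phi)$, $\beta(\phi_k)\to\beta(\phi)$, $\Psi'(\phi_k)\to\Psi'(\phi)$, $\Gamma_\phi(\phi_k)\to\Gamma_\phi(\phi)$, $\Gamma_\vv(\phi_k)\to\Gamma_\vv(\phi)$ strongly in every $L^q(\Omega_T)$, $q<\infty$, by dominated convergence; the weak limits $\mu_k\weak\mu$, $\vv_k\weak\vv$, $p_k\weak p$ exist in the respective spaces. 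The linear terms and the semilinear terms with strongly convergent coefficients pass to the limit directly; the convective term $(\nabla\phi_k\cdot\vv_k,\zeta)$ passes since $\nabla\phi_k$ converges strongly in $L^2(\Omega_T)$ against the weakly convergent $\vv_k$. The one genuinely nonlinear obstacle is the Forchheimer term $\beta(\phi_k)|\vv_k|^{s-1}\vv_k$: since $\vv_k$ converges only weakly, I cannot pass to the limit termwise. The resolution is Minty's trick adapted to the $\phi$-dependent, monotone operator $\vv\mapsto\alpha(\phi)\vv+\beta(\phi)|\vv|^{s-1}\vv$, which is the same monotonicity argument underlying \cref{Lem:Forchheimer}: using the energy inequality one gets $\limsup_k(\mathcal{A}(\phi_k)\vv_k,\vv_k)\le(\text{limit of the forcing terms})=(\mathcal{A}(\phi)\vv,\vv)$ in the limit equation, and then the standard monotonicity-hemicontinuity argument (now with the extra strong convergence of $\phi_k$ to handle the varying coefficients) identifies the weak limit of $\beta(\phi_k)|\vv_k|^{s-1}\vv_k+\alpha(\phi_k)\vv_k$ with $\beta(\phi)|\vv|^{s-1}\vv+\alpha(\phi)\vv$. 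Passing to the limit in the integrated energy identity with lower semicontinuity of $\mathcal{E}$ and of the dissipation $\mathcal{D}$ (convexity of $\|\nabla\cdot\|_{L^2}^2$ and of $|\cdot|^{s+1}$) yields the energy inequality \cref{Eq:Energy}, and the recovery $\phi(0)=\phi_0$ is standard from $\phi\in C(0,T;L^2)$ weakly. The main difficulty is precisely this combined step: ensuring enough integrability of the products (notably $\mu_k\nabla\phi_k$ and $\phi_k\mu_k\Gamma_\vv(\phi_k)$) so that the pressure estimate closes and the Minty argument for the $\phi$-dependent Forchheimer operator goes through without circular dependence on the very bounds being derived.
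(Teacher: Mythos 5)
Your overall architecture coincides with the paper's: Galerkin in $(\phi_k,\mu_k)$ with Neumann--Laplacian eigenfunctions, resolution of $(\vv_k,p_k)$ through \cref{Lem:Forchheimer}, energy estimates, higher-order elliptic bootstrapping, Aubin--Lions, and Minty's trick for $|\vv|^{s-1}\vv$. The one place where you genuinely deviate is the first energy estimate, and that is exactly where your argument has a gap. You test the Forchheimer equation with $\vv_k$ itself, so the term $(p_k,\Gamma_\vv(\phi_k))_\Omega$ survives on the right-hand side, and you propose to absorb it via the a priori bound $\|p_k\|_{W^{1,1+1/s}}\lesssim\|\mu_k\nabla\phi_k\|_{L^{1+1/s}}+\|\Gamma_\vv(\phi_k)\|_{L^{1+s}}$. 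At the stage of the basic energy estimate the only available controls are $\nabla\phi_k\in L^2(\Omega)$ and $\mu_k\in H^1(\Omega)\hookrightarrow L^6(\Omega)$, which by H\"older place $\mu_k\nabla\phi_k$ only in $L^{3/2}(\Omega)$. Since $1+1/s>3/2$ for every $s\in(1,2)$, the quantity $\|\mu_k\nabla\phi_k\|_{L^{1+1/s}}$ is not controlled by the energy for that range of $s$, and any attempt to upgrade the integrability of $\nabla\phi_k$ requires the higher-order estimates that themselves presuppose the energy bound --- the circularity you flag at the end of your proposal is real and is not resolved. The theorem claims all $s>1$, so this is not a cosmetic issue.

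The paper's way out is to never let the pressure enter the energy estimate: using \cref{Lem:Divergence} one constructs a lift $\u_k$ with $\div\,\u_k=\Gamma_\vv(\phi_k)$ in $\Omega$, constant normal boundary data, and a uniform bound $\|\u_k\|_{L^\infty}\leq C$ coming only from $\|\Gamma_\vv(\phi_k)\|_{L^\infty}$; testing the Forchheimer equation with $\vv_k-\u_k$ makes the pressure term $(p_k,\div(\vv_k-\u_k))_\Omega$ vanish identically, and the leftover terms $(\beta_k|\vv_k|^{s-1}\vv_k,\u_k)_\Omega$, $(\alpha_k\vv_k,\u_k)_\Omega$ and $(\mu_k\nabla\phi_k,\u_k)_\Omega$ are absorbed using only $\|\u_k\|_{L^\infty}\leq C$. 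The pressure is then estimated \emph{a posteriori}, after the higher-order bounds on $\phi_k$ are in hand (which is also where the exponent $\sigma'$ in \cref{Def:WeakSol} comes from). A related, smaller overstatement in your proposal: the comparison argument gives $\pt\phi_k$ bounded only in $L^2(0,T;(H^1\cap L^{2(s+1)/(s-1)})')$, not in $L^2(0,T;(H^1)')$, because $\nabla\phi_k\cdot\vv_k$ with $\vv_k\in L^{s+1}$ does not pair with $H^1$ test functions when $s<2$; this still suffices for Aubin--Lions but must be stated in the weaker dual. The remainder of your outline (higher-order estimates, identification of the Forchheimer nonlinearity by monotonicity plus strong convergence of $\phi_k$, lower semicontinuity in the energy inequality) matches the paper.
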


\begin{proof} We employ the Galerkin method to reduce the system to ODEs, derive energy estimates, and pass to the limit using compactness arguments. \medskip
	
	\noindent\textbf{Step 1: Galerkin approximation.}
	We will construct approximate solutions by applying a Galerkin approximation with respect to $(\phi,\mu)$ and at the same time solve for $v$ and $p$ in the corresponding whole function spaces. As a Galerkin basis for $(\phi,\mu)$, we use the eigenfunctions of the Neumann--Laplace operator $\{w_i\}_{i\in\N}$ that form an orthonormal Schauder basis in $L^2(\Omega)$ which is also a basis of $H_N^2(\Omega)$. 
	We fix $k\in\N$ and define ${W}_k= \text{span}\{w_1,...,w_k\}$.
	Our aim is to find functions of the form 
	\begin{equation*}
		\phi_k(t,x)=\sum_{i=1}^{k}a_i^k(t)w_i(x),\qquad \mu_k(t,x)=\sum_{i=1}^{k}b_i^k(t)w_i(x),
	\end{equation*}
	satisfying the following approximate problem:
	\begin{subequations}
		\begin{align}
			\label{Eq:GalerkinPhi}(\p_t\phi_k,\xi_k)_\Omega &= (-m(\phi_k)\nabla\mu_k,\nabla \xi_k)_\Omega +(\Gamma_{\phi,k},\xi_k)_\Omega  -(\nabla\phi_k \cdot \vv_k,\xi_k)_\Omega+(\phi_k\Gamma_{\vv,k},\xi_k)_\Omega,\\
			\label{Eq:GalerkinMu}
			(\mu_k,\xi_k)_\Omega &= \eps (\nabla \phi_k,\nabla \xi_k)_\Omega + (\Psi'(\phi_k),\xi_k)_\Omega,
		\end{align}
		for all $\xi_k\in{W}_k$ where we introduced $\Gamma_{\phi,k}= \Gamma_{\phi}(\phi_k)$ and $\Gamma_{\vv,k}= \Gamma_\vv(\phi_k)$. We define the approximate velocity $\vv_k$ and the approximate pressure $p_k$ as solutions of the generalized Forchheimer system \cref{Eq:Forchheimer} as studied in \cref{Lem:Forchheimer} with $\ff = \mu_k\nabla\phi_k$ and $g = \Gamma_{\vv,k}$.
		Using the continuous embedding $H_N^2(\Omega)\hookrightarrow L^{\infty}(\Omega)$, straightforward arguments yield that
		$\mu_k\nabla\phi_k \in L^2(\Omega)^d \con L^{1+1/s}(\O)^d$ and $\Gamma_{\vv,k}\in H^1(\Omega)\cap L^{\infty}(\Omega)$.
		Therefore, by \cref{Lem:Forchheimer}, we obtain that $(\vv_k,p_k)\in L^{s+1}_\div(\Omega)^d \times W_0^{1+1/s}(\Omega)$ and the following equations are satisfied 
		\begin{alignat}{3}
			\label{Eq:GalerkinP}\nabla p_k + \beta_k|\vv_k|^{s-1}\vv_k +\alpha_k \vv_k &= \mu_k\nabla \phi_k &&\quad\text{a.e. in }\Omega,\\
			\label{Eq:GalerkinV}\div \vv_k &= \Gamma_{\vv,k}&&\quad\text{a.e. in }\Omega,\\
			\nonumber p_k &= {0}&&\quad\text{a.e. on }\p\Omega,
		\end{alignat}
	\end{subequations}
	where $\beta_k=\beta(\phi_k)$ and $\alpha_k=\alpha(\phi_k)$.
	We obtain a coupled system of ODEs with continuously dependent right-hand. The Cauchy--Peano theorem ensures that there exists $T_k\in (0,\infty]$ such that the system has one solution tuple $(\phi_k,\mu_k)\in C^1([0,T_k);{W}_k)^2$. Furthermore, we can define $\vv_k$ and $p_k$ as the solutions of \cref{Eq:GalerkinP} \& \cref{Eq:GalerkinV}. With similar arguments as above, we deduce $(\vv_k(t),p_k(t))\in L_\div^{s+1}(\Omega)^d \times W_0^{1,1+1/s}(\Omega)$ for all $t\in [0,T_k)$. \bigskip

	\noindent\textbf{Step 2: First energy estimate.}
	In order to obtain a-priori estimates, we derive suitable energy estimates. We choose $\xi_k=\mu_k$ in \cref{Eq:GalerkinPhi}, $\xi_k=\pt \phi_k$ in \cref{Eq:GalerkinMu} 
	to obtain
	\begin{equation}\label{Eq:Test1}
		\begin{aligned}
			&\ddt \left( \|\Psi(\phi_k)\|_{L^1}+\frac{\eps^2}{2}\|\nabla\phi_k\|^2_{L^2} \right)  + (m(\phi_k),|\nabla\mu_k|^2)_\Omega =  (\Gamma_{\phi,k},\mu_k)_\Omega -  (\nabla \phi_k\cdot \vv_k + \phi_k \Gamma_{\vv,k},\mu_k)_\Omega. 
	\end{aligned}\end{equation}
	Regarding the Forchheimer system, we use the method of 'subtracting the divergence' as in \cite{ebenbeck2019analysis,ebenbeck2019cahn}. 
	Due to the assumption on $\Gamma_\vv$ (in particular $\Gamma_{\vv,k}\in L^{\infty}$ for all $k\in\N$) and using \cref{Lem:Divergence}, there exists a solution ${\u}_k\in {W}^{1,q}(\Omega)^d$, $q\in(1,\infty)$,  of the problem
	\begin{alignat*}{3}
		\div{\u}_k &= \Gamma_{\vv,k}&& \quad\text{in }\Omega,\\
		{\u}_k &= \frac{1}{|\p\Omega|} (\Gamma_{\vv,k},1)_\Omega \bm{n} \eqqcolon \bm{a}_k&&\quad\text{on }\p\Omega,
	\end{alignat*}
	satisfying for every $q\in (3,\infty)$  the estimate
	\begin{equation}
		\label{divergence_equation_regularity}\norm{{\u}_k}_{L^\infty} \lesssim \norm{{\u}_k}_{W^{1,q}}\lesssim \norm{\Gamma_{\vv,k}}_{L^q}\lesssim \norm{\Gamma_{\vv,k}}_{L^\infty} \leq C,
	\end{equation}
	where we used the continuous embedding $W^{1,q}(\Omega)^d\hookrightarrow L^{\infty}(\Omega)^d$, $q\in (3,\infty)$.
	We remark that the compatibility condition of \cref{Lem:Divergence} is fulfilled since
	\begin{equation*}
		(\bm{a}_k,\bm{n})_{\p\Omega} = \frac{1}{|\p\Omega|}(\Gamma_{\vv,k},1)_\Omega (\bm{n},\bm{n})_{\p\Omega} =  (\Gamma_{\vv,k},1)_\Omega.
	\end{equation*}
	
	Multiplying \cref{Eq:GalerkinP} with $\vv_k-{\u}_k$, integrating over $\Omega$ and by parts, we end up at
	$$\begin{aligned}
		(p_k,\div(\u_k-\vv_k))_\Omega+(\beta_k|v_k|^{s-1} \vv_k,\vv_k-\u_k)_\Omega +  (\alpha_k \vv_k,\vv_k-\u_k)_\Omega= (\mu_k\nabla\phi_k,\vv_k-\u_k)_\Omega.
	\end{aligned}$$
	As $\div \vv_k=\div \u_k$, the first term is zero and we obtain
	\begin{equation} \label{Eq:Test2} \begin{aligned}
			\beta_0 \|\vv_k\|_{L^{s+1}}^{s+1} +\alpha_0\|\vv_k\|^2_{L^2} &=  (\beta_k|\vv_k|^{s-1} \vv_k,\u_k)_\Omega + (\alpha_k \vv_k, {\u}_k)_\Omega +  (\mu_k\nabla\phi_k , \vv_k-\u_k)_\Omega
	\end{aligned}\end{equation}
	Summing \cref{Eq:Test1} and \cref{Eq:Test2} gives
	\begin{equation} \label{energy_identity_3}\begin{aligned}
			&\ddt \left(  \|\Psi(\phi_k)\|_{L^1}+\frac{\eps^2}{2}\|\nabla\phi_k\|_{L^2}^2\right) +  \beta_0 \|\vv_k\|_{L^{s+1}}^{s+1}+\alpha_0\|\vv_k\|^2_{L^2} + m_0\|\nabla\mu_k\|^2 \\
			&\leq  (\Gamma_{\phi,k},\mu_k)_\Omega -  (\nabla \phi_k\cdot{\u}_k + \phi_k \Gamma_{\vv,k},\mu_k)_\Omega + (\beta_k|v_k|^{s-1} \vv_k,\u_k)_\Omega + (\alpha_k \vv_k,{\u}_k)_\Omega.
	\end{aligned}\end{equation}
	Using Hölder's and Young's inequalities, we use \cref{divergence_equation_regularity} for the two last terms on the right-hand side to get
	\begin{equation} \label{A_priori_Stokes}\begin{aligned}
			(\alpha_k{\vv}_k,{\u}_k)_\Omega&\leq \alpha_\infty \norm{\vv_k}_{{L}^2}\norm{{\u}_k}_{{L}^2}
			\leq  \frac{\alpha_0}{2}\norm{\vv_k}_{{L}^2}^2 + C. \\
			(\beta_k|{\vv}_k|^{s-1}\vv_k,{\u}_k)_\Omega&\leq \beta_\infty \norm{\vv_k}_{{L}^{s+1}}^s\norm{{\u}_k}_{{L}^{s+1}}
			\leq  \frac{\beta_0}{2}\norm{\vv_k}_{{L}^{s+1}}^{s+1} + C.
	\end{aligned}\end{equation}
	Next, we want to deduce an estimate for the $L^2(\Omega)$-norm of $\mu_k$. Inserting $\xi_k=\mu_k$ into \cref{Eq:GalerkinMu}  yields
	\begin{equation*}
		\|\mu_k\|^2_{L^2} =  (\Psi'(\phi_k),\mu_k)_\Omega + \eps^2(\nabla \phi_k,\nabla\mu_k)_\Omega .
	\end{equation*} 
	Using Hölder's and Young's inequalities together with the assumptions on $\Psi$, see (A4) in \cref{Assumptions}, we obtain
	\begin{align*}
		\norm{\mu_k}_{L^2}^2 &\leq  C(1+|\phi_k|,|\mu_k|)_\Omega + \eps^2 \|\nabla \phi_k\|_{L^2} \|\nabla\mu_k\|_{L^2}  \\
		&\leq \frac{1}{2}\norm{\mu_k}_{L^2}^2 + C(1+\norm{\phi_k}_{L^2}^2) + \frac{\eps^4}{m_0}\norm{\nabla \phi_k}_{{L}^2}^2+\frac{m_0}{8} \norm{\nabla\mu_k}_{{L}^2}^2 ,
	\end{align*}
	and consequently
	\begin{equation}
		\label{A_priori_source_terms_b_1}\norm{\mu_k}_{L^2}^2\leq C(1+\norm{\phi_k}_{L^2}^2 +\norm{\nabla \phi_k}_{{L}^2}^2) +\frac{m_0}{4} \norm{\nabla\mu_k}_{{L}^2}^2.
	\end{equation}
	It remains to estimate the first two integrals on the right-hand side of (\ref{energy_identity_3}). Using (\ref{divergence_equation_regularity}), Hölder's and Young's inequalities, we obtain 
	\begin{align}
		\nonumber (\nabla\phi_k\cdot{\u}_k +\phi_k\Gamma_{\vv,k},\mu_k)_\Omega &\leq (\norm{\nabla\phi_k}_{{L}^2}\norm{{\u}_k}_{{L}^{\infty}} + \norm{\phi_k}_{L^2}\norm{\Gamma_{\vv,k}}_{L^{\infty}})\norm{\mu_k}_{L^2}\\
		\nonumber & \leq C \norm{\Gamma_{\vv,k}}_{L^{\infty}}(\norm{\nabla\phi_k}_{{L}^2}+\norm{\phi_k}_{L^2})\norm{\mu_k}_{L^2}\\
		\label{A_priori_source_terms_b_5}& \leq C(\norm{\phi_k}^2_{L^2} + \norm{\nabla\phi_k}_{{L}^2}^2) + \delta\norm{\mu_k}^2_{L^2},
	\end{align}
	with $\delta>0$ chosen sufficiently small. Then $\|\mu_k\|_{L^2}^2$ can be estimated as in \cref{A_priori_source_terms_b_1} and $\|\phi_k\|_{L^2}^2$ is estimated using the growth condition of $\Psi$, see (A4).
	Thus, we obtain the following energy inequality
	\begin{equation} \label{energy_identity_4}\begin{aligned}
			&\ddt \left( \|\Psi(\phi_k)\|_{L^1}+\frac{\eps^2}{2}\|\nabla\phi_k\|_{L^2}^2 \right) +  \frac{\beta_0}{2} \|\vv_k\|_{L^{s+1}}^{s+1}+\frac{\alpha_0}{2}\|\vv_k\|^2_{L^2} + \frac{m_0}{2}\|\nabla\mu_k\|^2_{L^2} \\
			&\lesssim 1 +\norm{\nabla\phi_k}_{{L}^2}^2+ \norm{\Psi(\phi_k)}_{L^1}.
	\end{aligned}\end{equation}
	By a Gronwall-type argument and the definition of the initial data, we deduce the uniform energy estimate
	\begin{equation} \label{energy_identity_5}\begin{aligned}
			& \|\Psi(\phi_k)\|_{L^\infty(L^1)}+\|\nabla\phi_k\|_{L^\infty(L^2)}^2  +  \|\vv_k\|_{L^{s+1}(L^{s+1})}^{s+1}+\|\vv_k\|^2_{L^2(L^2)} + \|\nabla\mu_k\|^2_{L^2(L^2)} \lesssim 1 +\norm{\phi_0}_{H^1}^2.
	\end{aligned}\end{equation}
	
	\noindent\textbf{Step 3: 
		Higher-order estimates.}
	Using the Gagliardo--Nirenberg inequality, see \cite[Theorem 1.24]{roubicek}, and elliptic regularity theory, we deduce that
	\begin{equation*}
		\label{higher_order_estimates_2}
		\begin{aligned}\norm{\phi_k}_{L^{\infty}}&\lesssim \norm{\phi_k}_{H^1}^{\frac{1}{2}}\norm{\phi_k}_{H^2}^{\frac{1}{2}}  \lesssim \norm{\phi_k}_{H^1}^{\frac{1}{2}}\left(\norm{\phi_k}^{\frac{1}{2}}_{L^2}+\norm{\Delta\phi_k}^{\frac{1}{2}}_{L^2}\right).
		\end{aligned}
	\end{equation*}
	Choosing $v=\Delta \phi_k$ in \cref{Eq:GalerkinMu}, we obtain by Hölder's inequality and the assumption on $\Psi$, see (A4), that
	\begin{equation*}
		\label{higher_order_estimates_4}
		\begin{aligned}\eps\norm{\Delta\phi_k}^2_{L^2(L^2)} &=  (\nabla\mu_k,\nabla\phi_k)_{\O_T} - (\Psi''(\phi_k),|\nabla\phi_k|^2)_{\O_T} \\
			&\leq  \norm{\nabla\mu_k}_{{L}^2(L^2)}\norm{\nabla\phi_k}_{{L}^2(L^2)}  +  C (1+|\phi_k|^q,|\nabla\phi_k|^2)_{\O_T} \\ 
			&\lesssim 1+ (|\phi_k|^q,|\nabla\phi_k|^2)_\OT.
		\end{aligned}
	\end{equation*}
	In the case of $q=0$, it is clearly bounded.
	In the case of $q\in (0,4)$, we use Hölder's inequality to obtain
	\begin{equation}\label{higher_order_estimates_8}
		\begin{aligned}
			(|\phi_k|^q,|\nabla\phi_k|^2)_\OT &\lesssim  (\norm{\phi_k}_{L^{\infty}}^q,\norm{\nabla\phi_k}_{{L}^2}^2)_{(0,T)}\\
			&\lesssim (\norm{\phi_k}^2_{H^1}\norm{\phi_k}^{\frac{q}{2}}_{H^1},\norm{\phi_k}^{\frac{q}{2}}_{L^2} + \norm{\Delta\phi_k}_{L^2}^{\frac{q}{2}})_{(0,T)}\\
			&\lesssim \norm{\phi_k}_{L^{\infty}(H^1)}^{q+2} + (\norm{\phi_k}^{\frac{q+4}{2}}_{L^2},\norm{\Delta\phi_k}^{\frac{q}{2}}_{L^2})_{(0,T)}.
		\end{aligned}
	\end{equation}
	We note that $\frac{4}{q}>1$ and thus, we use Young's inequality to estimate the last integral on the right-hand side of (\ref{higher_order_estimates_8}) by
	\begin{equation*}
		\label{higher_order_estimates_9}(\norm{\phi_k}^{\frac{q+4}{2}}_{H^1},\norm{\Delta\phi_k}^{\frac{q}{2}}_{L^2})_{(0,T)} \leq C\norm{\phi_k}_{L^{\infty}(H^1)}^{\frac{2(q+4)}{4-q}} + \frac{\eps^2}{2}\norm{\Delta\phi_k}_{L^2(L^2)}^2.
	\end{equation*}
	Consequently, we obtain $\frac{\eps^2}{2}\norm{\Delta\phi_k}_{L^2(L^2)}^2\leq C$, which implies by elliptic regularity theory that $\phi_k$ is uniformly bounded in $L^2(0,T;H^2(\Omega))$.
	Furthermore, we choose $v=\Delta^2 \phi_k$ in \cref{Eq:GalerkinMu} to obtain similarly to before
	$$\begin{aligned}\eps \|\nabla\!\Delta \phi_k\|_{L^2(L^2)}^2 &= (\nabla \mu_k,\nabla\!\Delta \phi_k)_{\O_T} + (\Psi''(\phi_k)\nabla \phi_k , \nabla\!\Delta \phi_k)_{\O_T} \\
		&\leq \frac{\eps}{2} \|\nabla\!\Delta \phi_k\|_{L^2(L^2)}^2 + C\|\nabla\mu_k\|_{L^2(L^2)}^2 + C\left(1+\|\phi_k\|_{L^2(L^\infty)}^{2q}\right) \|\nabla \phi_k\|_{L^\infty(L^2)}^2.
	\end{aligned}$$
	Then, we deduce from the uniform bound of $\nabla\!\Delta\phi_k$ in $L^2(\O_T)$ and elliptic regularity theory that
	$$\|\phi_k\|_{L^2(H^3)} \leq C.$$
	Lastly, by the Lions--Peetre interpolation method, we obtain that $\phi_k$ is uniformly bounded in $L^4(0,T;H^2(\Omega))$ and likewise $\mu_k$ in $L^4(0,T;L^2(\Omega))$. \medskip
	
	\noindent\textbf{Step 4:
		Estimation of the pressure.}
	Taking the scalar product of \cref{Eq:GalerkinP} with an arbitrary function $\ww\in L^\sigma(0,T;L^{s+1}(\O)^d)$, $\sigma \geq 1$ to be deduced, integrating over $\Omega_T$, we obtain
	\begin{equation}\label{Eq:Pressure}
		-(\nabla p_k,\ww)_{\O_T} =  (\alpha_k \vv_k+\beta_k|\vv_k|^{s-1} \vv_k - \mu_k\nabla \phi_k ,\ww)_{\O_T} =:R_k(w).
	\end{equation}
	In the following, we distinguish the cases of $s \in (1,2)$ and $s \geq 2$. First, in the case of $s \geq 2$, we have with Hölder's inequality 
	that
	\begin{equation*}\begin{aligned}
			|R_k(\ww)| &\lesssim \int_0^T \left( \norm{\vv_k}_{L^2} \|\ww\|_{L^2} + \|\vv_k\|_{L^{s+1}}^s \|\ww\|_{L^{s+1}}+\norm{\mu_k}_{L^6}\norm{\nabla\phi_k}_{{L}^2} \|\ww\|_{L^3} \right) \dt \\
			&\lesssim \norm{\vv_k}_{L^2(L^{2})} \|\ww\|_{L^2(L^2)} + \|\vv_k\|_{L^{s+1}(L^{s+1})}^s \|\ww\|_{L^{s+1}(L^{s+1})}  +\norm{\mu_k}_{L^2(H^1)}\norm{\nabla\phi_k}_{L^\infty({L}^2)} \|\ww\|_{L^{2}(L^3)}
			\\&\lesssim \|\ww\|_{L^{s+1}(L^{s+1})}.
		\end{aligned}
	\end{equation*} 
	Furthermore, for $s \in (1,2)$ the estimates are more delicate as we want to maintain the spatial $L^{s+1}(\Omega)$ regularity of $\ww$ but thus we cannot estimate $\phi_k$ using the $L^\infty(0,T;H^1(\Omega))$-estimate. Instead, we use the Gagliardo--Nirenberg inequality, see \cite[Theorem 1.24]{roubicek}, to deduce for $s \in (1,2)$ that
	$$\phi_k \in L^\infty(0,T;H^1(\Omega)) \cap L^2(0,T;H^3(\Omega)) \hookrightarrow L^{4(s + 1)/(2 - s)}(0,T;W^{1,6(s+1)/(5s-1)}(\Omega)),$$
	and thus
	\begin{equation*}\begin{aligned}
			|R_k(\ww)| &\lesssim \|\ww\|_{L^{s+1}(L^{s+1})}+\int_0^T  \norm{\mu_k}_{L^6}\norm{\nabla\phi_k}_{{L}^{(6s+6)/(5s-1)}} \|\ww\|_{L^{s+1}}  \dt \\
			&\lesssim   \|\ww\|_{L^{s+1}(L^{s+1})}+\norm{\mu_k}_{L^2(H^1)}\norm{\nabla\phi_k}_{L^{4(s + 1)/(2 - s)}(L^{6(s+1)/(5s-1)})} \|\ww\|_{L^{4(s+1)/(3s)}(L^{s+1})} \\
			&\lesssim   \|\ww\|_{L^{\max\{s+1,4(s+1)/(3s)\}}(L^{s+1})}.
		\end{aligned}
	\end{equation*} 
	Hence, $R_k$ is bounded in the dual space of $L^\sigma(0,T;L^{s+1}(\Omega)^d)$ where we have defined $\sigma$ by
	$$\sigma = \begin{cases}
		s+1, &s \geq 4/3, \\
		\tfrac43 (1+1/s), &s \in (1,4/3).
	\end{cases}$$
	Together, taking the supremum over all ${\ww}\in L^\sigma(0,T;L^{s+1}(\O)^d)$, we deduce that $p_k$ is bounded in the Bochner space $L^{\sigma'}(0,T;W_0^{1+1/s}(\Omega))$. We note that it holds $\sigma' \in (1,\frac74]$ and $\sigma\geq \frac73$, with the specific value depending on $s$. \medskip
	
	\noindent\textbf{Step 5: 
		Regularity for the convection terms.} To pass to the limit $k \to \infty$ later on, we investigate the boundedness of the term $\div(\phi_kv_k)$, which appears in \cref{Eq:GalerkinPhi}.
	Again, we use the Gagliardo--Nirenberg interpolation inequality to deduce the existence of some
	$\kappa \in (8,\infty)$ such that
	$$L^2(0,T;H^3(\Omega)) \cap L^\infty(0,T;H^1(\Omega)) \hookrightarrow L^\kappa(0,T;L^{6\kappa/(\kappa-8)}(\Omega)) \cap L^{8}(\Omega_T).$$ 
	First, let $s>7$. Then we choose $\kappa=s+1$ to obtain that $\phi_k$ is uniformly bounded in
	$$L^{s+1}(0,T;L^{6(s+1)/(s-7)}(\Omega)) \hookrightarrow L^{s+1}(0,T;L^{2(s+1)/(s+3)}(\Omega)), \quad s>8.$$
	For $s \in (1,7]$, the bound is valid in the same function space since $\phi_k$ is bounded in $$L^8(\O_T) \hookrightarrow L^{s+1}(0,T;L^{2(s+1)/(s+3)}(\Omega)), \quad s \in (1,7].$$
	Together, we deduce for the convection term that
	\begin{equation}
		\label{Apriori_convection_terms_1}
		\begin{aligned}
			\norm{\div(\phi_k\vv_k)}_{L^{s+1}(L^{2(s+1)/(s+3)})}  
			&\leq \norm{\nabla\phi_k  \cdot \vv_k}_{L^{s+1}(L^{2(s+1)/(s+3)})}+\norm{\phi_k \Gamma_{\vv,k}}_{L^{s+1}(L^{2(s+1)/(s+3)})} \\
			&\leq\norm{\phi_k}_{L^{\infty}(H^1)}\norm{\vv_k}_{L^{s+1}(L^{s+1})} + C\norm{\phi_k}_{L^{s+1}(L^{2(s+1)/(s+3)})} \\
			&\leq C.
		\end{aligned}
	\end{equation}
	
	\noindent\textbf{Step 6: 
		Regularity for the time derivative.}
	We consider an arbitrary function $$\zeta\in L^2\left(0,T;H^1(\O)\cap L^{2(s+1)/(s-1)}(\O)\right),$$ and coefficients $\{\zeta_{kj}\}_{1\leq j\leq k}$ such that $\mathbb{P}_k\zeta = \sum_{j=1}^{k}\zeta_{kj}w_j$. Then taking the test function $\mathbb{P}_k\zeta$ in \cref{Eq:GalerkinPhi} and using the uniform energy estimates, it gives 
	\begin{equation}
		\begin{aligned}
			\langle \pt \phi_k,\mathbb{P}_k\zeta \rangle &\lesssim \|\xi\|_{L^2(H^1)} + |(\div(\phi_k \vv_k),\xi)_\OT|
			\\&\lesssim \|\xi\|_{L^2(H^1)} + \norm{\div(\phi_k\vv_k)}_{L^{s+1}(L^{2(s+1)/(s+3)})} \|\xi\|_{L^{1+1/s}(L^{2(s+1)/(s-1)})} \\
			&\lesssim \|\xi\|_{L^2(H^1)} +  \|\xi\|_{L^2(L^{2(s+1)/(s-1)})}
		\end{aligned}
	\end{equation}
	Thus, we obtain
	\begin{equation}
		\label{Apriori_time_derivatives_2}\norm{\p_t\phi_k}_{L^2((H^1 \cap L^{2(s+1)/(s-1)})')} \leq C.
	\end{equation}
	
	\noindent\textbf{Step 7: 
		Limit passage.} 
	We first state the time-integrated system where we want to pass to the limit $k \to \infty$. In fact, for every $\eta \in C_c^1([0,T])$, $\u \in L^{s+1}(\Omega)^d$, $q \in L^{1+1/s}(\Omega)$ and $1 \leq j \leq k$, we have the discrete system 
	\begin{subequations}
		\begin{align}
			\label{Eq:GalerkinPhiTime}
			-(\phi_k,\xi_j\eta')_\OT  +(m(\phi_k)\nabla\mu_k,\nabla \xi_j\eta)_\OT -(\Gamma_{\phi,k},\xi_j\eta)_\OT   +(\nabla\phi_k \cdot \vv_k,\xi_j\eta)_\OT-(\phi_k\Gamma_{\vv,k},\xi_j\eta)_\OT &=0,\\
			\label{Eq:GalerkinMuTime}
			(\mu_k,\xi_j\eta)_\OT - \eps (\nabla \phi_k,\nabla \xi_j \eta)_\OT - (\Psi'(\phi_k),\xi_j\eta )_\OT &=0, \\
			\label{Eq:GalerkinForchTime} (\nabla p_k,\u\eta)_\OT + (\beta_k|\vv_k|^{s-1}\vv_k,\u\eta)_\OT +(\alpha_k \vv_k,\u\eta)_\OT-(\mu_k\nabla \phi_k,\u\eta)_\OT &= 0,\\
			\label{Eq:GalerkinDivTime} (\div \vv_k,q\eta)_\OT - (\Gamma_{\vv,k},q\eta)_\OT &=0.
		\end{align}
	\end{subequations}
	We collect the previously derived uniform estimates:
	\begin{equation}\label{central_estimate}\begin{aligned}
			&\norm{\phi_k}_{L^{\infty}(H^1)\cap L^2(H^2)\cap H^1((H^1\cap L^{2(s+1)/(s-1)})')} + \norm{\mu_k}_{L^2(H^1)}   + \norm{\div(\phi_k\vv_k)}_{L^{s+1}(L^{2(s+1)/(s+3)})} \\&+ \norm{\vv_k}_{L^{s+1}(L^{s+1}_\div)} + \norm{p_k}_{L^{\sigma'}(W_0^{1+1/s})} \leq C.
	\end{aligned}\end{equation}
	We obtain the existence of limit functions $(\phi,\mu,p,\vv)$ with the subsequential (with the same index) convergences:
	\begin{alignat*}{3}
		\phi_k&\rightharpoonup\phi&&\quad\text{weakly-}*&&\quad\text{in }L^{\infty}(0,T;H^1(\Omega))\cap L^2(0,T;H^3(\Omega))  \cap H^1(0,T;(H^1(\Omega)\cap L^{2(s+1)/(s-1)}(\Omega))'),\\
		\mu_k&\rightharpoonup \mu&&\quad\text{weakly}&&\quad\text{in }L^2(0,T;H^1(\Omega)),\\
		p_k&\rightharpoonup p&&\quad\text{weakly}&&\quad\text{in } L^{\sigma'}(0,T;W_0^{1+1/s}(\Omega)),\\
		\vv_k&\rightharpoonup \vv&&\quad\text{weakly}&&\quad\text{in }L^{s+1}(0,T;L_\div^{s+1}(\Omega)),\\
		\div(\phi_k\vv_k)&\rightharpoonup \tau&&\quad\text{weakly}&&\quad\text{in }{L^{s+1}(0,T;L^{2(s+1)/(s+3)}(\Omega))},
	\end{alignat*}
	for some limit function $\tau \in {L^{s+1}(0,T;L^{2(s+1)/(s+3)}(\Omega))}$.
	We apply the Aubin--Lions lemma with the Gelfand triples
	$$\begin{aligned} H^3(\Omega) \com W^{2,r}(\Omega)  &\con (H^1(\Omega)\cap L^{2(s+1)/(s-1)}(\Omega))', &&\quad \forall r \in [1,6), \\
		H^1(\Omega) \com L^r(\Omega)  &\con (H^1(\Omega)\cap L^{2(s+1)/(s-1)}(\Omega))', &&\quad \forall r \in [1,6), \end{aligned}$$
	to conclude the strong convergence:
	\begin{alignat*}{3}
		\phi_k&\rightarrow\phi&&\quad\text{strongly}&&\quad\text{in }C^0([0,T];L^r(\Omega))\cap L^2(0,T;W^{2,r}(\Omega)), \quad \forall r\in[1,6).
	\end{alignat*}
	By the uniqueness of limits, we directly obtain $\phi(0)=\phi_0$ in $L^r(\Omega)$ for any $r \in [1,6)$.
	
	The convergence of all the linear terms can be directly treated using weak convergences. The convergence of the term $(\Gamma_{\phi,k},\xi_j\eta)_\OT$ can be treated using the strong convergence of $\phi_k$, the boundedness of $\Gamma_\phi$ and the Lebesgue dominated convergence theorem. The product terms $(m(\phi_k)\nabla\mu_k,\nabla\xi_j\eta)_\OT$, $(\phi_k \Gamma_{\vv,k},\xi_j \eta)_\OT$, $(\alpha_k\vv_k,q\eta)_\OT$, $(\mu_k\nabla\phi_k)$ follow due to the weak-strong convergence lemma because there is always a product of a strongly and a weakly converging sequence. By passing limits in \cref{Eq:GalerkinPhiTime} and integrating by parts back and forth, we find that the limit function $\tau$ is indeed equal to $\div(\phi v)$.
	
	It remains to treat the nonlinear operator $F:L^{s+1}(\Omega_T)^d\to L^{1+1/s}(\Omega_T)^d$ with $F(\vv_k)=|\vv_k|^{s-1} \vv_k$. We note that $F$ is monotone and hemicontinuous, see \cref{Appendix}, and bounded by \cref{central_estimate}, which implies by Minty's trick, see \cite[Lemma 2.1]{andreianov2017nonlinear}, that
	$$F(\vv_k) \rightharpoonup F(\vv) \text{ weakly in } L^{1+1/s}(\Omega_T)^d.$$
	Together with the strong convergence of $\phi_k$ and the boundedness of $\beta$, we get that that $\beta(\phi_k)$ converges strongly a.e.~in $\O_T$ and  thus, $\beta(\phi_k)F(\vv_k)$ converges weakly in $L^{1+1/s}(\Omega_T)^d$. In fact, we obtain
	$$(\beta(\phi_k)F(\vv_k),\u\eta )_{\Omega_T} \to (\beta(\phi)F(\vv),\u\eta )_{\Omega_T}.$$
	
	\noindent\textbf{Step 8: Limit energy inequality}
	Finally, we consider the limit in the energy identity. 
	We first recall \cref{Eq:Test1} which reads
	\begin{equation}\label{Eq:Testx}
		\begin{aligned}
			&\ddt \left( \|\Psi(\phi_k)\|_{L^1}+\frac{\eps^2}{2}\|\nabla\phi_k\|^2_{L^2} \right)  + (m(\phi_k),|\nabla\mu_k|^2)_\Omega =  (\Gamma_{\phi,k},\mu_k)_\Omega -  (\nabla \phi_k\cdot \vv_k + \phi_k \Gamma_{\vv,k},\mu_k)_\Omega. 
	\end{aligned}\end{equation}
	Multiplying \cref{Eq:GalerkinP} with $\vv_k$, integrating over $\Omega$ and by parts, we end up at
	$$\begin{aligned}
		-(p_k,\div(\vv_k))_\Omega+(\beta_k|v_k|^{s-1} \vv_k,\vv_k)_\Omega +  (\alpha_k \vv_k,\vv_k)_\Omega= (\mu_k\nabla\phi_k,\vv_k)_\Omega.
	\end{aligned}$$
	Summation and integration over  $(0,T)$ yields the discrete energy identity
	\begin{equation}\label{Eq:Testz}
		\begin{aligned}
			\mathcal{E}(\phi_k)(t) + \int_0^t\mathcal{D}(\mu_k,\vv_k) = \mathcal{E}(\phi_k)(0) + \int_0^t(\mu_k,\Gamma_{\phi,k})_\Omega + (p_k,\Gamma_{\vv,k})_\Omega -  (\phi_k\mu_k,\Gamma_{\vv,k})_\Omega
	\end{aligned}\end{equation}
	with dissipation
	\begin{equation*}
		\mathcal{D}(\mu_k,\vv_k):= (m(\phi_k),|\nabla\mu_k|^2)_\Omega +(\beta_k|v_k|^{s-1} \vv_k,\vv_k)_\Omega +  (\alpha_k \vv_k,\vv_k)_\Omega.
	\end{equation*}
	Note that we exchanged all $\div(\vv_k)$ with $\Gamma_{\vv,k}$, which is almost everywhere the same.
	When passing to the limit in \cref{Eq:Testz}, the energy and the dissipation are treated by the weak lower semi-continuity of norms and Fatou's lemma for $\Psi$. The initial energy is strongly converging. Hence, we have to consider the production terms on the right-hand side. Due to strong convergence of $\phi_k$ and continuity of $\Gamma_\phi,\Gamma_\vv$, we obtain strong and almost everywhere convergence of $\Gamma_{\phi,k}$ and $\Gamma_{\vv,k}.$ We may note that $\phi_k\Gamma_{\vv,k}$ strongly converges in $L^2(\Omega_T).$ Hence, we can pass to the limit in all terms using the weak-strong convergence lemma, which yields
	\begin{equation}\label{Eq:Energy}
		\begin{aligned}
			\mathcal{E}(\phi)(t) + \int_0^t\mathcal{D}(\mu,\vv) \leq \mathcal{E}(\phi)(0) + \int_0^t(\mu,\Gamma_{\phi})_\Omega + (p,\Gamma_{\vv})_\Omega -  (\phi\mu,\Gamma_{\vv})_\Omega.
	\end{aligned}\end{equation}
\end{proof}

\section{Structure-preserving approximation} \label{Sec:Appro}
As a preparatory step, we introduce the relevant notation and assumptions regarding the discretisation strategy for the Cahn--Hilliard--Forchheimer system. For spatial discretization, we require that $\Th$ is a geometrically conforming partition of $\Omega \subset \R^d $ into simplices. For simplicity, we present all estimates for $d=3$. The lower dimensions can be obtained similarly, if not much simpler. 

First, to derive a suitable variational scheme for discretization, we use integration by parts as usual. The only term where we have to be careful is the Kortweg stress $\nabla\phi\mu$ for which we use the following identity
\begin{align*}
	\la \nabla p -\phi\nabla\mu,\ww\ra_\Omega &= \la \nabla p - \nabla(\phi\mu) + \phi\nabla\mu,\ww\ra_\Omega \\
	&= -\la p - \phi\mu, \div(\ww)\ra_\Omega + \la p-\phi\mu,\ww\cdot\nvec \ra_{\partial\Omega} + \la \phi\nabla\mu,\ww \ra_\Omega \\
	&= -\la \tilde p, \div(\ww)\ra_\Omega - \la \phi\mu,\ww\cdot\nvec \ra_{\partial\Omega} + \la \phi\nabla\mu,\ww \ra_\Omega.
\end{align*}
In the following, we use a typical abuse notation and always simply write $p$ even though we actually use $p$ if $\gamma=1$ and $\tilde p$ if $\gamma=0.$  We denote the space of continuous, piecewise linear functions over $\Th$ by $\Vh$, the Raviart--Thomas space of order one by $\Xh$, as well as the space of (discontinuous) piece-wise linear over $\Th$ by $\Qh$, that is, we define
\begin{align*}
	\Vh &:= \{v \in H^1(\Omega)\cap C^0(\bar\Omega) : v|_K \in P_1(K) \quad \forall K \in \Th\}, \\ 
	\Xh &:= \{\vv \in L^2_{\div}(\Omega): \vv|_K \in \mathrm{RT}_1(K)=P_1(K)^d\oplus \mathbf{x}P_1(K)\quad \forall K \in \Th \}, \\
	\Qh &:= \{v \in L^2(\Omega) : v|_K \in P_1(K) \quad \forall K \in \Th\}.
\end{align*}
Here, $P_1^H(K)$ are the homogeneous polynomials of degree 1 on an element $K.$ Further, we define the discrete Laplacian $\Delta_h:\Vh\to\{v\in\Vh:\la v,1 \ra_\Omega=0\}$ and the Raviart--Thomas projection $\Pi_h:X\to \Xh$ by
\begin{equation}\label{eq:RTproj}
	\begin{aligned}
		\la \Delta_hv_h,\psi_h \ra_\Omega &= - \la \nabla v_h,\nabla \psi_h \ra_\Omega &&\quad\forall \psi_h\in\Vh, \\
		\la \div(\Pi_h\vv-\vv),q_h \ra_\Omega &= 0 &&\quad\forall q_h\in \Qh,
	\end{aligned}
\end{equation}
and the following errors estimates hold, cf.~\cite{pan2012mixed}:
\begin{align}
	\norm{\Pi_h\vv-\vv}_{L^q} &\leq Ch^s\norm{\vv}_{W^{s,q}} &&\hspace{-2.3cm}\forall \vv\in X\cap W^{s,q}(\Omega)^d \qquad\,\text{ for } \frac{1}{q}<s\leq 2, \label{eq:RTproj_Lq}\\
	\norm{\div(\Pi_h\vv-\vv)}_{L^2} &\leq Ch^s\norm{\div(\vv)}_{H^{s}} &&\hspace{-2.3cm} \forall \vv\in X\cap H^s(\div,\Omega) \qquad\text{for } 0 \leq s\leq 2. \label{eq:RTproj_Div}
\end{align}
In the upcoming numerical analysis of the fully discrete scheme, we require the following discrete interpolation inequalities.
\begin{lemma}[Discrete interpolation inequalities\cite{Diegel2017,Heywood1982}]
	Let $\Omega$ a convex polyhedral domain, and $\Th$ is assumed to be a globally quasi-uniform triangulation of $\Omega$. Then the following holds
	\begin{align}
		\norm{\psi_h}_{L^\infty} +  \norm{\nabla\psi_h}_{L^3} \lesssim \norm{\Delta_h\psi_h}_{L^2}^{1/2}\norm{\psi_h}_{L^6}^{1/2} + \norm{\psi_h}_{L^6} \qquad \forall \psi_h \in \Vh.    \label{eq:discreteinftyinterp} 
	\end{align}
\end{lemma}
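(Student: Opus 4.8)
The plan is to deduce \cref{eq:discreteinftyinterp} from the corresponding continuous Gagliardo--Nirenberg inequality by the classical device (as in \cite{Heywood1982,Diegel2017}) of lifting $\psi_h$ to an $H^2$-function via the Laplacian, and then transferring the estimate back to $\Vh$ using the stability of the elliptic projection on quasi-uniform meshes.

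Since $\nabla\psi_h$ and $\Delta_h\psi_h$ are unchanged when a constant is subtracted from $\psi_h$, while the mean of $\psi_h$ is controlled by $\norm{\psi_h}_{L^6}$ up to a factor depending only on $\snorm{\Omega}$, we may assume that $\psi_h$ has zero mean. We then introduce $w\in H^2(\Omega)$ as the unique mean-zero weak solution of the Neumann problem $\Delta w=\Delta_h\psi_h$ in $\Omega$, $\nabla w\cdot\nvec=0$ on $\partial\Omega$, which is solvable because $\Delta_h\psi_h$ has zero mean by the definition \cref{eq:RTproj} of $\Delta_h$; by $H^2$-elliptic regularity on the convex polyhedral domain, $\norm{w}_{H^2}\lesssim\norm{\Delta_h\psi_h}_{L^2}$. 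Testing the weak form of this Neumann problem against $v_h\in\Vh$ and using \cref{eq:RTproj} once more shows that $\la\nabla(w-\psi_h),\nabla v_h\ra_\Omega=0$ for all $v_h\in\Vh$, i.e.\ $\psi_h=R_hw$ is exactly the elliptic (Ritz) projection of $w$ onto $\Vh$.

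On the continuous level, the sharp Gagliardo--Nirenberg inequality in three dimensions together with $H^2$-regularity on the convex domain gives
\begin{equation*}
	\norm{w}_{L^\infty}+\norm{\nabla w}_{L^3}\lesssim\norm{w}_{H^2}^{1/2}\norm{w}_{L^6}^{1/2}+\norm{w}_{L^6}\lesssim\norm{\Delta_h\psi_h}_{L^2}^{1/2}\norm{w}_{L^6}^{1/2}+\norm{w}_{L^6}.
\end{equation*}
It remains to pass from $w$ to $\psi_h=R_hw$ on both sides without incurring negative powers of $h$, which is where the global quasi-uniformity of $\Th$ is used. By the Rannacher--Scott--type $L^\infty$- and $W^{1,3}$-stability of the elliptic projection on quasi-uniform triangulations one has $\norm{\psi_h}_{L^\infty}\lesssim\norm{w}_{L^\infty}$ and $\norm{\nabla\psi_h}_{L^3}\lesssim\norm{\nabla w}_{L^3}$, and a companion bound of the same kind gives $\norm{w}_{L^6}\lesssim\norm{\psi_h}_{L^6}$; substituting these into the displayed estimate yields \cref{eq:discreteinftyinterp}. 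The two-dimensional case follows in the same way, with the $W^{1,3}$-step replaced by the corresponding borderline interpolation inequality, which carries at most a harmless logarithmic factor.

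The hard part is this final transfer step: the uniform-in-$h$ stability of the elliptic projection in $L^\infty(\Omega)$ and in $W^{1,3}(\Omega)$ on quasi-uniform meshes, and the matching estimate $\norm{w}_{L^6}\lesssim\norm{\psi_h}_{L^6}$, which rely on weighted-norm and duality techniques; rather than reproving them we will invoke the corresponding results from \cite{Heywood1982,Diegel2017}.
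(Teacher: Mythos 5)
You should first note that the paper itself gives no proof of this lemma at all---it is stated as a cited result from \cite{Diegel2017,Heywood1982}---so the only meaningful comparison is between your sketch and the argument in those references. Your overall architecture (reduce to zero mean, lift $\psi_h$ to the Neumann solution $w$ of $\Delta w=\Delta_h\psi_h$ so that $\psi_h$ is the Ritz projection of $w$, apply the continuous Gagliardo--Nirenberg inequality to $w$, and transfer back via stability of the Ritz projection on quasi-uniform meshes) is indeed the standard route and matches the cited literature.

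There is, however, one step whose justification as you have written it would not survive scrutiny: the bound $\norm{w}_{L^6}\lesssim\norm{\psi_h}_{L^6}$, which you describe as ``a companion bound of the same kind'' as the $L^\infty$- and $W^{1,3}$-stability of the elliptic projection. It is not of the same kind: it is a \emph{reverse} bound for the projection, and the obvious way to obtain it---writing $\norm{w}_{L^6}\le\norm{\psi_h}_{L^6}+\norm{w-R_hw}_{L^6}$ and using the error estimate $\norm{w-R_hw}_{L^6}\lesssim h\norm{w}_{H^2}\lesssim h\norm{\Delta_h\psi_h}_{L^2}$---produces, after substitution into the Gagliardo--Nirenberg bound, an extra term of order $h^{1/2}\norm{\Delta_h\psi_h}_{L^2}$. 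This term is \emph{not} dominated by the right-hand side of \cref{eq:discreteinftyinterp}: taking $\psi_h$ to be a sum of $N=h^{-3\theta}$ well-separated nodal basis functions ($0<\theta<1$) one finds $\norm{\psi_h}_{L^\infty}+\norm{\nabla\psi_h}_{L^3}\sim h^{-\theta}$, $\norm{\Delta_h\psi_h}_{L^2}^{1/2}\norm{\psi_h}_{L^6}^{1/2}\sim h^{-\theta}$ (so the stated inequality is saturated and carries no slack), while $h^{1/2}\norm{\Delta_h\psi_h}_{L^2}\sim h^{-3\theta/2}\gg h^{-\theta}$. Any transfer step that loses a factor $h^{1/2}\norm{\Delta_h\psi_h}_{L^2}$ therefore breaks the proof; the reverse $L^6$ bound genuinely requires a duality/weighted-norm argument of its own and cannot be waved through as a routine companion of the stability estimates. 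A secondary caveat of the same flavour: for piecewise \emph{linear} elements the $L^\infty$-stability $\norm{R_hw}_{L^\infty}\lesssim\norm{w}_{L^\infty}$ is only known with a $|\log h|$ factor in general, and your remark about a ``harmless logarithmic factor'' only in two dimensions understates where such factors actually enter. So the skeleton of your argument is the right one, but the single step you delegate most casually is precisely the one that carries the difficulty, and the justification you offer for it would fail quantitatively.
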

Furthermore, to handle the Forchheimer equation, we make use of an inf-sup stability result.
\begin{lemma}[Inf-sup stability\cite{pan2012mixed}]
	There exists a positive constant $c$ independent of $h$ such that  
	\begin{align}
		\inf_{q_h\in\Qh}\sup_{\vv_h\in\Xh} \frac{\la \div(\vv_h),q_h \ra_\Omega}{\norm{q_h}_{\Qh}\norm{\vv_h}_{\Xh}} \geq c. \label{eq:infsup}
	\end{align}
\end{lemma}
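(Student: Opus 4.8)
The plan is to verify \cref{eq:infsup} by the classical Fortin argument: it suffices to establish (a) the continuous surjectivity of the divergence onto $L^2(\Omega)$ with a right inverse bounded into $H^1(\Omega)^d$, and (b) the existence of a uniformly stable \emph{Fortin operator} $\Xh$-valued operator that commutes with the divergence. Here the natural candidate for the Fortin operator is precisely the Raviart--Thomas projection $\Pi_h$ already introduced in \cref{eq:RTproj}, so most of the analytical content is packaged into the estimates \cref{eq:RTproj_Lq}--\cref{eq:RTproj_Div} recalled above, and the proof becomes bookkeeping with $h$-independent constants.

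\textbf{Continuous right inverse.} Given $q_h\in\Qh\subset L^2(\Omega)$, I would solve the Poisson problem $\Delta w=q_h$ in $\Omega$, $w=0$ on $\partial\Omega$. Since $\Omega$ is a convex polyhedral domain, elliptic regularity yields $w\in H^2(\Omega)$ with $\norm{w}_{H^2}\lesssim\norm{q_h}_{L^2}$, hence $\vv:=\nabla w\in H^1(\Omega)^d$ satisfies $\div\vv=q_h$ and $\norm{\vv}_{H^1}\lesssim\norm{q_h}_{L^2}$. (Equivalently, one may invoke \cref{Lem:Divergence} with a boundary datum chosen to absorb the mean of $q_h$ — this is the same construction used in the ``subtracting the divergence'' step of \cref{THM_WSOL_1}.)

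\textbf{Properties of $\Pi_h$ and conclusion.} The commuting relation $\div\Pi_h\vv=P_h\div\vv$, where $P_h$ denotes the $L^2(\Omega)$-orthogonal projection onto $\Qh$, is immediate from the defining identity $(\div(\Pi_h\vv-\vv),q_h)_\Omega=0$ for all $q_h\in\Qh$ in \cref{eq:RTproj}. Uniform $L^2$-stability follows from \cref{eq:RTproj_Lq} with $q=2$, $s=1$: $\norm{\Pi_h\vv-\vv}_{L^2}\lesssim h\norm{\vv}_{H^1}$, so $\norm{\Pi_h\vv}_{L^2}\leq\norm{\vv}_{L^2}+Ch\norm{\vv}_{H^1}\lesssim\norm{\vv}_{H^1}$ with an $h$-independent constant; combined with $\norm{\div\Pi_h\vv}_{L^2}=\norm{P_h\div\vv}_{L^2}\leq\norm{\div\vv}_{L^2}\leq\norm{\vv}_{H^1}$ this gives $\norm{\Pi_h\vv}_{\Xh}\lesssim\norm{\vv}_{H^1}$. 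Now for $q_h\in\Qh$, take $\vv$ from the previous step and set $\vv_h:=\Pi_h\vv\in\Xh$; then $\div\vv_h=P_hq_h=q_h$, whence $(\div\vv_h,q_h)_\Omega=\norm{q_h}_{L^2}^2=\norm{q_h}_{\Qh}^2$ while $\norm{\vv_h}_{\Xh}\lesssim\norm{\vv}_{H^1}\lesssim\norm{q_h}_{\Qh}$. Therefore $\sup_{\ww_h\in\Xh}(\div\ww_h,q_h)_\Omega/\norm{\ww_h}_{\Xh}\geq(\div\vv_h,q_h)_\Omega/\norm{\vv_h}_{\Xh}\geq c\,\norm{q_h}_{\Qh}$ for some $c>0$ independent of $h$; dividing by $\norm{q_h}_{\Qh}$ and taking the infimum over $q_h\in\Qh$ gives \cref{eq:infsup}.

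I expect the only genuinely delicate point to be the first step: the right inverse of $\div$ must land in $H^1(\Omega)^d$ and not merely in $H(\div,\Omega)$, because $\Pi_h$ is not uniformly stable on $H(\div)$ alone. This is exactly where the convexity of $\Omega$ (or, equivalently, the Bogovski\u{\i}-type construction behind \cref{Lem:Divergence}) enters. All remaining steps are routine, and every constant is $h$-independent since those in the Poisson regularity bound and in \cref{eq:RTproj_Lq} are.
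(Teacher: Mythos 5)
Your Fortin-operator argument is correct and is the standard proof of this inf-sup condition; the paper itself does not prove the lemma but cites \cite{pan2012mixed}, where essentially the same construction (an $H^1$-bounded right inverse of the divergence via elliptic regularity on the convex domain, followed by the commuting, uniformly $L^2$-stable Raviart--Thomas projection $\Pi_h$ with $\div\Pi_h\vv=P_h\div\vv$) is used. The only point left implicit is that $\norm{\vv_h}_{\Xh}$ must be read as the $H(\div)$-norm and $\norm{q_h}_{\Qh}$ as the $L^2$-norm, which is the standard interpretation here and the one under which your constants are indeed $h$-independent.
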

In order to obtain analytical result for the numerical method, we reinforce (A4) in \cref{Assumptions}  as follows:

\begin{assumption}\label{Assumption2} Let \cref{Assumptions} hold and additionally:
	\begin{itemize}[leftmargin=1.2cm] \itemsep.1em
		\item[(A5)] It holds (A4) and $\Psi\in C^3(\R,\R_{\geq 0})$ with
		$|\Psi''(s)|\lesssim 1+|s|^q$, $|\Psi'''(s)|\lesssim 1+|s|^{q-1}$ for some $q \in [0,2)$ such that and  $|\Psi''(s)|> - C$. 
		\item[(A6)] We assume that $\Omega$ is a convex polyhedral domain, and $\Th$ is assumed to be a globally quasi-uniform triangulation of $\Omega$
	\end{itemize}    
\end{assumption}

Regarding the time discretization, we introduce the following notions.
We divide the time interval $[0,T]$ into uniform steps with step size $\tau>0$ and introduce $\Itau:=\{0=t^0,t^1=\tau,\ldots, t^{n_T}=T\}$, where $n_T=\tfrac{T}{\tau}$ is the absolute number of time steps. We denote by $\Pi^1_c(\Itau;X)$ and $\Pi^0(\Itau;X)$ the spaces of continuous piecewise linear and piecewise constant functions on $\Itau$ with values in the space $X$, respectively. By $g^{n+1}$ and $g^n$ we denote the evaluation/approximation of a function $g$ in $\Pi^1_c(\Itau)$ or $\Pi^0(\Itau)$ at $t=\{t^{n+1},t^n\}$, respectively, and write $I_n=(t^n,t^{n+1})$. The discrete-time derivative is denoted by $d^{n+1}_\tau g := \frac{g^{n+1}-g^n}{\tau}$.
The time average of a function $g(\rho)$ for $\rho\in\Pi^1_c(\Itau)$ is given by
$g_\av(\rho) := \frac{1}{\tau}\int_{I_n} g(\rho(s)) \ds.$

\begin{problem}\label{prob:ac2}
	Let the initial data $\phi_{h}^0\in \Vh$ be given. Find $\phi_h\in \Pi^1_c(\Itau;\Vh)$ and $(\mu_{h},\vv_h,p_h)\in \Pi^0(\Itau;\Vh\times\Xh\times\Qh)$ that satisfy the variational system
	\begin{align}
		&\la\dtau\phi_h,\psi_h\ra_\Omega  - \la \phi_h^n\vv_h^{n+1},\nabla\psi_h \ra_\Omega + \gamma\la \phi_h^n\vv_h^{n+1}\cdot\nvec,\psi_h \ra_{\partial\Omega} + \la m(\phi_h^n)\nabla\mu_h^{n+1},\nabla\psi_h \ra_\Omega - \la \Gamma_\phi(\phi_h^{n}),\psi_h \ra_\Omega = 0, \label{eq:pg1} \\[.2cm]
		&\la \mu_h^{n+1}, \xi_h\ra_\Omega - \varepsilon^2\la \nabla\phi_h^{n+1},\nabla\xi_h \ra_\Omega -  \la \Psi'_\av(\phi_h),\xi_h \ra_\Omega = 0, \label{eq:pg2}\\[.2cm]
		&\left( (\alpha(\phi_h^n) + \beta(\phi_h^n)|\vv_h^{n+1}|^{s-1})\vv_h^{n+1},\ww_h \right)_\Omega - \la p_h^{n+1},\div(\ww_h) \ra_\Omega + \la \phi^n_h\nabla\mu_h^{n+1},\ww_h \ra_\Omega + \gamma\la \phi_h^n\mu_h^{n+1},\div(\ww_h) \ra_\Omega \notag \\
		& - \gamma\la \phi_h^n\mu_h^{n+1},\ww_h\cdot\nvec \ra_{\partial\Omega}=0, \label{eq:pg3}\\[.2cm]
		&\la \div(\vv_h^{n+1}),q_h \ra_\Omega - \la \Gamma_\vv(\phi_h^n),q_h \ra_\Omega = 0 \label{eq:pg4}
	\end{align}
	for every $(\psi_h,\xi_h,\ww_h,q_h)\in\Vh\times\Vh\times\Xh\times\Qh$ and every $0\leq n\leq n_T-1$.
\end{problem}
We show the well-posedness of the fully discrete scheme in \cref{prob:ac2} and further prove that the total mass and energy-dissipation are preserved. Here, the discrete dissipation and source rate are defined by
\begin{align*}
	\int_{t^m}^{t^n}\mathcal{D}(\mu_h,\vv_h) \ds &:= \tau\sum_{k=m}^{n-1} \la m(\phi_h^k)\nabla\mu_h^{k+1},\nabla\mu_h^{k+1} \ra_\Omega + \la (\alpha(\phi_h^k)+\beta(\phi_h^k)\snorm{\vv_h^{k+1}}^{s-1})\vv_h^{k+1},\vv_h^{k+1} \ra_\Omega, \\
	\int_{t^m}^{t^n} \mathcal{P}(\mu_h,p_h) \ds &:= \tau\sum_{k=m}^{n-1} \la \Gamma_\phi(\phi_h^{k}),\mu_h^{k+1} \ra_\Omega +  \la \Gamma_\vv(\phi_h^k),p_h^{k+1} \ra_\Omega - \gamma\la \phi_h^k\mu_h^{k+1},\div(\vv_h^{k+1}) \ra_\Omega.
\end{align*}
Then our main result on the structure-preserving analysis of  \cref{prob:ac2} reads as follows:
\begin{theorem}
	Let \cref{Assumption2} hold. Furthermore, let $h,\tau>0$ be given such that $\tau\leq C_e$ where $C_e>0$ depends solely on the parameters and external forces. Then, for any $\phi_{h}^0 \in \Vh$, \cref{prob:ac2} admits at least one solution $(\phi_h,\mu_h,\vv_h,p_h)$. Moreover, any such solution conserves the total mass and energy-dissipation balance, that is, it holds
	\begin{align}
		\la \phi_h^{n},1 \ra_\Omega = \la \phi_h^m,1 \ra_\Omega &+ \tau\sum_{k=m}^{n-1}\la \Gamma_\phi(\phi_h^k),1 \ra_\Omega - \gamma\la \phi_h^k,\vv_h^{k+1}\cdot\nvec \ra_{\partial\Omega}, \label{thm1:mass:phi}  \\
		\mathcal{E}(\phi_h)\vert_{t^m}^{t^n} &+\int_{t^m}^{t^n}\mathcal{D}(\mu_h,\vv_h) \ds\leq  \int_{t^m}^{t^n} \mathcal{P}(\mu_h,p_h) \ds. \label{thm1:mass:energy}
	\end{align}
	Furthermore, if the time step restriction $\tau\leq C_u$ holds, then the solution is unique. Here, the constant $C_u>0$ depends solely on the parameters and the initial data.
\end{theorem}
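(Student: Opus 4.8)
Since \cref{prob:ac2} is decoupled across the time levels, the plan is to fix $0\le n\le n_T-1$ and the known datum $\phi_h^n\in\Vh$, to settle existence, the two balances and uniqueness for the single step $n\mapsto n+1$, and to conclude by induction on $n$. For the \emph{balances} I would test the scheme with its canonical ``energy'' functions: taking $\psi_h\equiv1$ in \cref{eq:pg1} annihilates every $\nabla\psi_h$-term and, after multiplication by $\tau$ and summation, yields \cref{thm1:mass:phi}; testing \cref{eq:pg1} with $\mu_h^{n+1}$, \cref{eq:pg2} with $\dtau\phi_h$, \cref{eq:pg3} with $\vv_h^{n+1}$ and \cref{eq:pg4} with $p_h^{n+1}$, then subtracting the second identity from the first, adding the third, and eliminating $(p_h^{n+1},\div\vv_h^{n+1})_\Omega$ through the fourth, yields the energy identity. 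Two algebraic facts make it work: $a(a-b)=\tfrac12(a^2-b^2)+\tfrac12(a-b)^2$ converts $\eps^2(\nabla\phi_h^{n+1},\nabla\dtau\phi_h)_\Omega$ into $\tfrac1{2\tau}\big(\norm{\nabla\phi_h^{n+1}}_{L^2}^2-\norm{\nabla\phi_h^n}_{L^2}^2\big)$ plus a nonnegative remainder, and the time average in $\Psi'_\av$ is tuned so that $(\Psi'_\av(\phi_h),\dtau\phi_h)_\Omega=\tfrac1\tau\big((\Psi(\phi_h^{n+1}),1)_\Omega-(\Psi(\phi_h^n),1)_\Omega\big)$ because $\phi_h$ is affine on $I_n$; the convective, Korteweg and boundary terms cancel pairwise. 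Summing over the steps and discarding the nonnegative numerical dissipation gives \cref{thm1:mass:energy}, and nothing beyond ``being a solution'' is used, so both balances hold for every solution.

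For \emph{existence} the plan is to turn the step into a fixed-point problem on $\Vh$. Given $\phi_h^{n+1}\in\Vh$, \cref{eq:pg2} is a symmetric positive definite system and determines $\mu_h^{n+1}$ continuously; with $\phi_h^n$ and $\mu_h^{n+1}$ frozen, \cref{eq:pg3}--\cref{eq:pg4} is a monotone mixed problem, which I would solve by choosing $\vv_h^{*}\in\Xh$ with $(\div\vv_h^{*},q_h)_\Omega=(\Gamma_\vv(\phi_h^n),q_h)_\Omega$ via \cref{eq:infsup}, splitting $\vv_h^{n+1}=\vv_h^{*}+\vv_h^{0}$ with $\vv_h^{0}$ discretely divergence free so that the pressure and the $\div(\ww_h)$-terms drop on divergence-free test functions, solving the remaining strictly monotone, coercive, bounded equation for $\vv_h^{0}$ by Browder--Minty (the discrete counterpart of \cref{Lem:Forchheimer}), and recovering $p_h^{n+1}$ from the full \cref{eq:pg3} again by \cref{eq:infsup}; this map is continuous in $\phi_h^{n+1}$. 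Substituting back, a solution of \cref{prob:ac2} is a fixed point of a continuous self-map of $\Vh$, and any such fixed point obeys the energy identity. Bounding its right-hand side — testing \cref{eq:pg2} with $\mu_h^{n+1}$ to control $\norm{\mu_h^{n+1}}_{L^2}$, using \cref{eq:infsup} and (A6) to control $\norm{p_h^{n+1}}_{\Qh}$, then absorbing by Young's inequality under $\tau\le C_e$ — bounds $\mathcal E(\phi_h^{n+1})$, hence $\norm{\phi_h^{n+1}}_{H^1}$, in terms of $\phi_h^n$ and the data; as $\Vh$ is finite dimensional, the Leray--Schauder (Schaefer) principle then gives a solution.

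For \emph{uniqueness} I would compare two solutions with the same $\phi_h^n$ and set $\delta\phi,\delta\mu,\delta\vv,\delta p$ equal to their differences. Subtracting \cref{eq:pg4} and testing with $\delta p$ gives $\div(\delta\vv)=0$. Testing the differences of \cref{eq:pg1}, \cref{eq:pg2}, \cref{eq:pg3}, \cref{eq:pg4} with $\delta\mu$, $\delta\phi/\tau$, $\delta\vv$, $\delta p$ and combining them as in the balance, the convective, Korteweg, pressure and boundary terms cancel (using $\div(\delta\vv)=0$), leaving the sum of $(m(\phi_h^n)\nabla\delta\mu,\nabla\delta\mu)_\Omega$, $\tfrac{\eps^2}{\tau}\norm{\nabla\delta\phi}_{L^2}^2$, $(\alpha(\phi_h^n)\delta\vv,\delta\vv)_\Omega$ and the difference of the Forchheimer terms (nonnegative by monotonicity of $\bm a\mapsto|\bm a|^{s-1}\bm a$ and $\beta\ge\beta_0>0$) equal to $-\tfrac1\tau$ times the difference of the $\Psi'_\av$-terms paired with $\delta\phi$. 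Writing $\Psi'_\av(\phi_h)=\int_0^1\Psi'\big(\phi_h^n+\theta(\phi_h^{n+1}-\phi_h^n)\big)\,\dd\theta$ and using the semiconvexity bound $\Psi''\ge-C$ from (A5), that term is $\le\tfrac{C}{2\tau}\norm{\delta\phi}_{L^2}^2$; testing the difference of \cref{eq:pg1} with $\delta\phi$ gives $\tfrac1\tau\norm{\delta\phi}_{L^2}^2\lesssim(\norm{\delta\vv}_{L^2}+\norm{\nabla\delta\mu}_{L^2})\norm{\nabla\delta\phi}_{L^2}$ (after standard trace and inverse estimates on the fixed mesh), so that Young's inequality plus a restriction $\tau\le C_u$ absorb it. This forces $\nabla\delta\mu=0$, $\nabla\delta\phi=0$, $\delta\vv=0$; testing the difference of \cref{eq:pg1} with $\psi_h\equiv1$ then gives $(\delta\phi,1)_\Omega=0$, hence $\delta\phi=0$, whence $\delta\mu=0$ from \cref{eq:pg2} and $\delta p=0$ from \cref{eq:pg3} with \cref{eq:infsup}.

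The cancellations and the Hölder/Young bookkeeping are routine; the real work is elsewhere. In existence, closing the a priori estimate is delicate because the production terms $(\Gamma_\phi(\phi_h^n),\mu_h^{n+1})_\Omega$, $(\Gamma_\vv(\phi_h^n),p_h^{n+1})_\Omega$ and $\gamma(\phi_h^n\mu_h^{n+1},\div\vv_h^{n+1})_\Omega$ live on the new time level and are controlled only through the auxiliary $\mu_h^{n+1}$- and $p_h^{n+1}$-estimates, the quasi-uniformity (A6), and the smallness $\tau\le C_e$. In uniqueness, the genuinely new ingredient is the relative-energy step: the non-convexity of $\Psi$ produces $\tfrac{C}{2\tau}\norm{\delta\phi}_{L^2}^2$ with the ``wrong'' sign and a $\tau^{-1}$ weight, and it is tamed only because $\delta\vv$ is exactly divergence free and because \cref{eq:pg1} lets one trade $\norm{\delta\phi}_{L^2}$ against $\norm{\nabla\delta\phi}_{L^2}$, $\norm{\delta\vv}_{L^2}$ and $\norm{\nabla\delta\mu}_{L^2}$ at the cost of the restriction $\tau\le C_u$; handling the $\gamma$-boundary terms and the normal traces of the Raviart--Thomas velocities requires additional care.
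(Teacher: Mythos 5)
Your treatment of the two balances and of uniqueness follows essentially the same route as the paper: test with $1$ for the mass balance; test with $(\mu_h^{n+1},\dtau\phi_h,\vv_h^{n+1},p_h^{n+1})$ and use the exactness of the time-averaged potential derivative for the energy balance; and for uniqueness run a relative-energy argument in which the semiconvexity defect of $\Psi$ is traded, via the difference of \cref{eq:pg1} tested with $\delta\phi$ and the discrete interpolation inequality \eqref{eq:discreteinftyinterp}, against the dissipation under $\tau\le C_u$. Your existence packaging (an inner solve for $\mu_h^{n+1}$ and for the discrete Forchheimer pair, wrapped in a fixed point over $\Vh$) differs from the paper's single coercivity argument $J(x)\cdot x\to\infty$ plus the Brouwer corollary, but is viable in principle; note only that Schaefer's principle requires the a priori bound along the whole homotopy family, not just for actual solutions, which is why the coercivity formulation is cleaner here.

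There is, however, one genuine gap: your plan to close the a priori energy estimate by controlling the production term $\la \Gamma_\vv(\phi_h^n),p_h^{n+1}\ra_\Omega$ through the inf-sup pressure bound \eqref{eq:infsup}. That bound returns
\begin{equation*}
c\norm{p_h^{n+1}}_{L^2}\;\lesssim\;\norm{\vv_h^{n+1}}_{L^2}+\norm{\vv_h^{n+1}}_{L^{s+1}}^{s}+\norm{\phi_h^n}_{L^\infty}\norm{\nabla\mu_h^{n+1}}_{L^2}+\norm{\nabla\phi_h^n}_{L^3}\norm{\mu_h^{n+1}}_{H^1},
\end{equation*}
and the factors $\norm{\phi_h^n}_{L^\infty}$, $\norm{\nabla\phi_h^n}_{L^3}$ are \emph{not} dominated by the Gronwall variable $y^n=\norm{\phi_h^n}_{H^1}^2+\la\Psi(\phi_h^n),1\ra_\Omega$ in $d=3$; via \eqref{eq:discreteinftyinterp} they require $\norm{\Delta_h\phi_h^n}_{L^2}$, which is only available after summation in time (see \eqref{eq:disch2}) and hence not at the single-step stage of the recursion. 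Consequently the constants in your discrete Gronwall inequality, and therefore $C_e$, would depend on $\phi_h^n$ (and through inverse estimates on $h$), contradicting the claim that $C_e$ depends only on parameters and data. The way out — and what the paper actually does — is to eliminate the pressure from the energy estimate altogether: solve the continuous auxiliary problem $\div\,\u^{n+1}=\Gamma_\vv(\phi_h^n)$ with constant normal boundary datum (\cref{Lem:Divergence}), which gives $\norm{\u^{n+1}}_{W^{1,q}}\le C$ uniformly because $\Gamma_\vv\in L^\infty$, take its Raviart--Thomas projection $\u_h^{n+1}=\Pi_h\u^{n+1}$ so that $\div(\u_h^{n+1}-\vv_h^{n+1})=0$ and $\norm{\u_h^{n+1}}_{L^q}\le C$ by \eqref{eq:Lqbounddisc}, rewrite $\la\Gamma_\vv(\phi_h^n),p_h^{n+1}\ra_\Omega=\la\div(\u_h^{n+1}),p_h^{n+1}\ra_\Omega$ and test \cref{eq:pg3} with $\ww_h=\u_h^{n+1}$, so that only Forchheimer and Korteweg terms paired with the uniformly bounded $\u_h^{n+1}$ remain. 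The pressure estimate via \eqref{eq:infsup} is then derived a posteriori, and only in $L^1(0,T;L^2(\Omega))$, once \eqref{eq:uniformhtaubounds} and \eqref{eq:disch2} are in hand. A related remark applies to your discrete lifting $\vv_h^{*}$: constructing it purely from \eqref{eq:infsup} yields only an $H(\div)$-type bound, whereas the Forchheimer and Korteweg pairings against the lifting require the uniform $L^{s+1}$ and $L^\infty$ control that the detour through the continuous divergence problem and $\Pi_h$ provides.
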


\begin{remark} We make the following remarks:
	\begin{itemize}
		\item The explicit evaluation at $\phi_h^n$ in the parameter functions and the transport term is a consequence of the uniqueness proof. The balance of mass and energy-dissipation also holds for implicit evaluation.
		\item The same results can be achieved almost verbatim if the parametric functions $\alpha,\beta,m,\Gamma_\phi,\Gamma_\vv$ are replaced by suitable projections or interpolations as long as they preserve a positive lower bound of $\alpha,\beta,m$. Similarly, one could also work with Brezzi--Douglas--Marini (BDM) spaces instead of Raviart--Thomas spaces.
		\item In principle, the estimates derived in the upcoming existence proof are uniform in $h,\tau$. Upon deriving some further estimates for $\nabla\Delta_h\phi$, the scheme can also be used as an approximate sequence to show the existence of weak solutions.
	\end{itemize}
	
\end{remark}

\begin{proof} First, we prove the structure-preserving properties of the scheme in \cref{prob:ac2}. Afterward, we derive suitable a-priori error bounds to show the existence and uniqueness of discrete solutions. \medskip
	
	\noindent\textbf{Step 1: Mass and energy-dissipation balance.}
	Regarding the balance of total mass, see \eqref{thm1:mass:phi}, we insert the test function $\psi_h=1$ in \eqref{eq:pg1} to deduce that
	\begin{align*}
		\restrb{\la \phi_h, 1\ra}{t^n}{t^{n+1}} = \int_{I_n}\la\pt\phi_h, 1\ra_\Omega &= \int_{I_n} \la \dtau\phi_h,1 \ra_\Omega \\
		&= - \int_{I_n}\la m(\phi_h^n)\nabla\mu_h^{n+1},\nabla 1 \ra_\Omega - \la \phi_h^n\vv_h^{n+1},\nabla 1 \ra_\Omega + \la \phi_h^n,\vv_h^{n+1}\cdot\nvec \ra_{\partial\Omega}+ \la \Gamma_\phi(\phi_h^{n}),1 \ra_\Omega \\
		&= \tau\left(\la \Gamma_\phi(\phi_h^{n}),1 \ra_\Omega - \la \phi_h^n,\vv_h^{n+1}\cdot\nvec \ra_{\partial\Omega}\right).
	\end{align*}
	The sum of the relevant time steps yields the result \eqref{thm1:mass:phi}. Let us now prove the energy-dissipation balance as stated in \eqref{thm1:mass:energy}. We have
	\begin{align*}
		\frac{1}{\tau}\restrb{\mathcal{E}(\phi_h)}{t^n}{t^{n+1}} & = \frac{1}{\tau}_{I_n} \varepsilon^2\la \nabla\pt\phi_h,\nabla\phi_h \ra_\Omega + \la \pt\phi_h,\Psi'(\phi_h) \ra_\Omega \\
		& = \varepsilon^2\la \nabla\phi^{n+1},\dtau\nabla\phi_h \ra_\Omega - \frac{\tau\varepsilon^2}{2}\norm{\nabla\dtau\phi_h}_{L^2}^2 + \la \Psi'_\av(\phi_h),\dtau\phi_h \ra_\Omega \\
		& \leq \varepsilon^2\la \nabla\phi^{n+1},\dtau\nabla\phi_h \ra_\Omega  + \la \Psi'_\av(\phi_h),\dtau\phi_h \ra_\Omega \\
		&= (a) + (b) , 
	\end{align*}
	where we introduced the short notation $(a)$ and $(b)$ for the two terms on the right-hand side. Inserting $\xi_h=\dtau\phi_h$ in \eqref{eq:pg2} and afterwards $\psi_h=\mu_h^{n+1}$ in \eqref{eq:pg1} yields
	\begin{align*}
		(a)  + (b)    =\,&  \la \mu_h^{n+1},\dtau\phi_h \ra_\Omega \\
		=\,&  - \la m(\phi_h^n)\nabla\mu_h^{n+1},\nabla\mu_h^{n+1} \ra_\Omega + \la \phi_h^n\vv_h^{n+1},\nabla\mu_h^{n+1}\ra_\Omega - \gamma\la \phi_h^n\vv_h^{n+1}\cdot\nvec,\mu_h^{n+1}\ra_{\partial\Omega} + \la \Gamma_\phi(\phi_h^n),\mu_h^{n+1} \ra_\Omega.
	\end{align*}
	Further we insert $\ww_h=\vv_h^{n+1}$ in \eqref{eq:pg3} and $q_h=p_h^{n+1}$ in \eqref{eq:pg4}, which implies:
	\begin{align*}
		0  =\,&  - \la m(\phi_h^n)\nabla\mu_h^{n+1},\nabla\mu_h^{n+1} \ra_\Omega - \la (\alpha(\phi_h^n)+\beta(\phi_h^n)\snorm{\vv_h^{n+1}}^{s-1})\vv_h^{n+1},\vv_h^{n+1}\ra_\Omega \\
		&+ \la \Gamma_\vv(\phi_h^n),p_h^{n+1}\ra_\Omega + \la \Gamma_\phi(\phi_h^n),\mu_h^{n+1} \ra_\Omega - \gamma\la \phi_h^n\mu_h^{n+1}, \div(\vv_h^{n+1})\ra_\Omega,    
	\end{align*}
	and summing over the relevant time steps yields the desired result \eqref{thm1:mass:energy}. \medskip
	
	\noindent\textbf{Step 2: a-priori bounds.}
	To derive suitable a-priori bounds, we mimic the proof in the Galerkin setting in \cref{Sec:Analysis}. First we consider the auxiliary problem:
	$$\begin{aligned}
		\div(\u^{n+1})&= \Gamma_\vv(\phi_h^n) &&\text{ in } \Omega, \\
		\u^{n+1} &= \frac{1}{|\partial\Omega|}\la \Gamma_\vv(\phi_h^n),1 \ra \nvec&&\text{ on } \p\Omega.
	\end{aligned}$$
	Since the right-hand side is bounded, see (A3) in \cref{Assumptions}, we apply \cref{Lem:Divergence} to establish the existence of a function $\u^{n+1}$ with the uniform bound
	\begin{align*}
		\norm{\u^{n+1}}_{W^{1,q}} \leq C\norm{\Gamma_\vv(\phi_h^n)}_{L^q} \qquad \forall q\in(1,\infty).
	\end{align*}   
	Using the Raviart--Thomas projection \eqref{eq:RTproj}, we then define $\u^{n+1}_h:=\Pi_h\u^{n+1}\in\Xh$ such that
	\begin{align}
		\la \div(\vv_h^{n+1}),p_h^{n+1} \ra &= \la \div(\u_h^{n+1}),p_h^{n+1} \ra, \label{eq:uhn+1} \\
		\norm{\div(\u^{n+1}_h)}_{L^2}^2 &\leq \norm{\div(\u^{n+1})}_{L^2}^2 \leq \norm{\Gamma_\vv(\phi_h^n)}_{L^2}^2 \leq C, \label{eq:divbounddisc}\\
		\norm{\u_h^{n+1}}_{L^q} &\leq  \norm{\u^{n+1}}_{L^q} +  \norm{\u^{n+1}-\u_h^{n+1}}_{L^q} \leq C, \label{eq:Lqbounddisc}
	\end{align}
	for every $p\in[1,\infty].$ We insert $q_h=p_h^{n+1}$ in \eqref{eq:pg3} and use \eqref{eq:uhn+1} to find
	\begin{align*}
		\la \Gamma_\vv(\phi_h^n),p_h^{n+1}\ra_\Omega = \la \div(\vv_h^{n+1}),p_h^{n+1} \ra_\Omega = \la \div(\u_h^{n+1}),p_h^{n+1} \ra_\Omega. 
	\end{align*}
	Moreover, choosing the test function $\ww_h=\u_h^{n+1}$ in \eqref{eq:pg3} yields
	\begin{align}
		\la \Gamma_\vv(\phi_h^n),p_h^{n+1}\ra_\Omega =\,&    \la (\alpha(\phi_h^n)+\beta(\phi_h^n)\snorm{\vv_h^{n+1}}^{s-1})\vv_h^{n+1}, \u_h^{n+1}\ra_\Omega - \la \phi_h^n\nabla\mu_h^{n+1},\u_h^{n+1} \ra_\Omega \notag\\
		&- \gamma\la \phi_h^n\mu_h^{n+1},\div(\u_h^{n+1}) \ra_\Omega + \gamma\la \phi_h^n\mu_h^{n+1},\u_h^{n+1}\cdot\nvec \ra_{\partial\Omega}. \label{eq:div_expansion}
	\end{align}
	Then, substracting \eqref{eq:pg4} and \eqref{eq:RTproj}, and using $q_h=\div(\u_h^{n+1}-\vv_h^{n+1})$ yields 
	\begin{equation*}
		\norm{ \div(\u_h^{n+1}-\vv_h^{n+1})}_{L^2}^2=0 ~\Longrightarrow~  \div(\u_h^{n+1}-\vv_h^{n+1})=0 \text{ point-wise in } \Omega.  
	\end{equation*}
	From the energy-dissipation balance, \eqref{eq:div_expansion} and multipliying with $\tau$, we deduce the following inequality
	\begin{align*}
		\restrb{\mathcal{E}(\phi_h)}{t^n}{t^{n+1}} &+ \tau\left(\la m(\phi_h^n)\nabla\mu_h^{n+1},\nabla\mu_h^{n+1} \ra_\Omega + \la (\alpha(\phi_h^n)+\beta(\phi_h^n)\snorm{\vv_h^{n+1}}^{s-1})\vv_h^{n+1},\vv_h^{n+1}\ra_\Omega\right)  \\
		&\leq \tau\left(\la (\alpha(\phi_h^n)+\beta(\phi_h^n)\snorm{\vv_h^{n+1}}^{s-1})\vv_h^{n+1}, \u_h^{n+1}\ra_\Omega  + \la \Gamma_\phi(\phi_h^n),\mu_h^{n+1} \ra_\Omega \right.\\
		& \left.\quad- \la \phi_h^n\nabla\mu_h^{n+1},\u_h^{n+1} \ra_\Omega + \gamma\la \phi_h^n\mu_h^{n+1},\u_h^{n+1}\cdot\nvec \ra_{\partial\Omega}\right)\\
		&:= (a) + (b) + (c) + (d).
	\end{align*}
	Using the improved regularity of $\Psi$, see (A5) in \cref{Assumption2}, we then obtain 
	\begin{align}
		C_0(\norm{\phi_h^{n+1}}_{H^1}^2 &+ \la \Psi(\phi_h^{n+1}),1 \ra)_\Omega + \tau\left(m_0\norm{\nabla\mu_h^{n+1}}_{L^2}^2 + \alpha_0\norm{\vv_h^{n+1}}_{L^2}^2 + \beta_0\norm{\vv_h^{n+1}}_{L^{s+1}}^{s+1}\right) \notag\\
		&\leq  \tau((a) + \ldots + (e)) + C_1(\norm{\phi_h^{n}}_{H^1}^2 + \la \Psi(\phi_h^{n}),1 \ra).
		\label{eq:discenergybound}
	\end{align}
	The four terms on the right-hand side are estimated in standard fashion similar to the continuous case using \eqref{eq:divbounddisc}--\eqref{eq:Lqbounddisc}. This procedure yields
	\begin{align}
		(a) &\leq \frac{\alpha_0}{2}\norm{\vv_h^{n+1}}_{L^2}^2 + \frac{\beta_0}{2}\norm{\vv_h^{n+1}}_{L^{s+1}}^{s+1} + C\norm{\u_h^{n+1}}_{L^2}^2 + C'\norm{\u_h^{n+1}}_{L^{s+1}}^{s+1} \notag\\
		& \leq \frac{\alpha_0}{2}\norm{\vv_h^{n+1}}_{L^2}^2 + \frac{\beta_0}{2}\norm{\vv_h^{n+1}}_{L^{s+1}}^{s+1} + C \label{eq:estadisc}\\
		(b) &= \la \Gamma_\phi(\phi_h^n),\mu_h^{n+1}-\la\mu_h^{n+1}, 1\ra_\Omega \ra_\Omega + \la \Gamma_\phi(\phi_h^n), 1\ra_\Omega\la \mu_h^{n+1}, 1\ra_\Omega \label{eq:estcdisc}\\
		& \leq \frac{m_0}{4}\norm{\nabla\mu_h^{n+1}}_{L^2}^2 + C\norm{\Gamma_\phi(\phi_h^n)}_{L^2}^2 + C|\la \Psi'_\av(\phi_h), 1\ra_\Omega|^2 \notag\\
		& \leq \frac{m_0}{4}\norm{\nabla\mu_h^{n+1}}_{L^2}^2 + C + C\norm{\phi_h^{n+1}}_{L^2}^2 + C\norm{\phi_h^{n}}_{L^2}^2, \label{eq:estbdisc} \\ 
		(c) &\leq \frac{m_0}{4}\norm{\nabla\mu_h^{n+1}}_{L^2}^2 + C\norm{\phi_h^n}^2_{L^6}\norm{\u_h^{n+1}}_{L^3} \notag\\
		&\leq \frac{m_0}{4}\norm{\nabla\mu_h^{n+1}}_{L^2}^2 + C\norm{\phi_h^n}^2_{L^6} \notag\\
		(b) + (d) + (e) &= \la \mu_h^{n+1}\nabla\phi_h^n,\u_h^{n+1} \ra_\Omega + \la \phi_h^{n}\mu_h^{n+1},\div(\u_h^{n+1}) \ra_\Omega\notag\\
		&= \la [\mu_h^{n+1}-\bar\mu_h^{n+1}]\nabla\phi_h^n,\u_h^{n+1} \ra_\Omega + \la \bar\mu_h^{n+1}\nabla\phi_h^n,\u_h^{n+1} \ra_\Omega \\
		&\quad+ \la \phi_h^{n}(\mu_h^{n+1}-\bar\mu_h^{n+1}),\div(\u_h^{n+1}) \ra_\Omega + + \la \phi_h^{n}\bar\mu_h^{n+1},\div(\u_h^{n+1}) \ra_\Omega\notag\\
		&\leq C\norm{\nabla\phi_h^n}_0^2 +\frac{m_0}{4}\norm{\nabla\mu_h^{n+1}}_0^2 + C\norm{\nabla\phi_h^n}_0|\bar\mu_h^{n+1}|\notag\\
		&\leq C\norm{\nabla\phi_h^n}_0^2 +\frac{m_0}{4}\norm{\nabla\mu_h^{n+1}}_0^2 + C\norm{\Psi'_\av(\phi_h)}^2_{L^1} \notag\\
		&\leq C\norm{\nabla\phi_h^n}_0^2 +\frac{m_0}{4}\norm{\nabla\mu_h^{n+1}}_0^2 + C\norm{\phi_h^n}^2_{L^2} + C + C\norm{\phi_h^{n+1}}^2_{L^2}\label{eq:estdgammadisc}
	\end{align}
	where we used the estimate \eqref{eq:Lqbounddisc}. Note that \eqref{eq:estdgammadisc} is only used if $\gamma=1$, while \eqref{eq:estcdisc} is used for $\gamma=0$.
	At this point, we combine \eqref{eq:discenergybound} with the estimates above, which gives
	\begin{align*}
		(C_0-C_2\tau)y^{n+1} + \tau D^{n+1} \leq (C_1+\tau C_3)y^{n}  + \tau C_4 
	\end{align*}
	with
	\begin{align*}
		y^k&:=\norm{\phi_h^{k+1}}_{H^1}^2 + \la \Psi(\phi_h^{n+1}),1 \ra_\Omega, \\ 
		D^k&:=\left( m_0\norm{\nabla\mu_h^{k+1}}_{H^1}^2 + \alpha_0\norm{\vv_h^{k+1}}_{L^2}^2 + \beta_0\norm{\vv_h^{k+1}}_{L^{s+1}}^{s+1}\right).   
	\end{align*}
	Finally, we choose $\tau\leq\frac{C_0}{2C_1}:=C_e$ and apply the discrete Gronwall inequality to find the uniform bounds
	\begin{align}
		\max_k \norm{\phi_h^k}_{H^1}^2 + \tau\sum_{k=1}^{n_T} \norm{\mu_h^k}_{H^1}^2 +  \norm{\vv_h^{k}}_{L^2}^2 +  \norm{\vv_h^{k}}_{L^{1+s}}^{1+s} \leq C(\phi_0). \label{eq:uniformhtaubounds}
	\end{align}
	
	\noindent\textbf{Step 3: Pressure estimates.}
	To deduce a uniform estimate of the pressure as in the continuous case, we need to increase the regularity of $\phi_h$. Hence, we estimate the discrete Laplacian using \eqref{eq:pg2} with $\psi_h=\Delta_h\phi_h^{n+1}$, from which we obtain
	\begin{align*}
		\norm{\Delta_h\phi_h^{n+1}}_{L^2}^2 &\leq C\norm{\mu_h^{n+1}}_{L^2}^2 + C\norm{\Psi'_\av(\phi_h)}_{L^2}^2.  
	\end{align*}
	Summation over all time steps, using (A4) from \cref{Assumptions} and the uniform bounds that we have just derived, see \eqref{eq:uniformhtaubounds},  we deduce that
	\begin{align}
		\tau\sum_{k=0}^{n_T-1}\norm{\Delta_h\phi_h^{k+1}}_{L^2}^2 &\leq C\tau\sum_{k=0}^{n_T-1}\norm{\mu_h^{k+1}}_{L^2}^2 + C\tau\sum_{k=0}^{n_T-1}\norm{\Psi'_\av(\phi_h)}_{L^2}^2  \leq C(\phi_0).  \label{eq:disch2}
	\end{align}
	Then we use inf-sup stability \eqref{eq:infsup} and the variational identity \eqref{eq:pg3} to find
	\begin{align*}
		c\norm{p_h^{n+1}}_{L^2} &\leq \inf_{\ww_h\in\Xh}\frac{\la p_h^{n+1},\ww_h \ra_\Omega}{\norm{\ww_h}_{\Xh}} \\
		&\leq\alpha_\infty\norm{\vv_h^{n+1}}_{L^2} + \beta_\infty\norm{\vv_h^{n+1}}_{L^{s+1}}^{(s+1)/s} + (1-\gamma)\norm{\phi_h^n\nabla\mu_h^{n+1}}_{L^2} + \gamma\norm{\nabla\phi_h^n\mu_h^{n+1}}_{L^2}\\
		& \leq \alpha_\infty\norm{\vv_h^{n+1}}_{L^2} + \beta_\infty\norm{\vv_h^{n+1}}_{L^{s+1}}^{(s+1)/s} + \norm{\phi_h^n}_{L^\infty}\norm{\nabla\mu_h^{n+1}}_{L^2} + \norm{\nabla\phi_h^n}_{L^3}\norm{\mu_h^{n+1}}_{H^1}.
	\end{align*}
	Summation over all time steps yields
	\begin{align*}
		\tau\sum_{k=0}^{n_T-1}\norm{p_h^{k+1}}_{L^2} &\leq \tau\sum_{k=0}^{n_T-1}\alpha_\infty\norm{\vv_h^{k+1}}_{L^2}^2 + \beta_\infty\norm{\vv_h^{k+1}}_{L^{s+1}}^{(s+1)/s} + \norm{\phi_h^k}_{L^\infty}^2   + \norm{\nabla\mu_h^{k+1}}_{L^2}^2+ \norm{\nabla\phi_h^k}_{L^3}^2.
	\end{align*}
	In view of the uniform bounds in \eqref{eq:uniformhtaubounds}, we only need to consider the terms involving the $L^\infty$ and the $L^3$ norms. Using the interpolation results of the discrete Laplacian, see \eqref{eq:discreteinftyinterp}, we then obtain 
	\begin{align*}
		\tau\sum_{k=0}^{n_T-1}\norm{\phi_h^k}_{L^\infty}^2 + \norm{\nabla\phi_h^k}_{L^3}^2\leq \tau\sum_{k=0}^{n_T-1}C\norm{\nabla\phi_h^k}_{L^2}^2 + \norm{\Delta_h\phi_h^k}_{L^2}^2  \leq C. 
	\end{align*}
	The above bounds are derived from the uniform bounds \eqref{eq:uniformhtaubounds} and the additional uniform bounds \eqref{eq:disch2}. Thus, this implies that $p_h$ is uniformly bounded in at least $L^1(0,T;L^2(\Omega)).$ \medskip

	\noindent\textbf{Step 4: Existence of discrete solutions.}
	We consider the $(n+1)$-th time step and assume that $\phi_h^{n}$ is already known. 
	After choosing a basis for $\Vh\times\Vh\times\Xh\times\Qh$, we rewrite the variational system \eqref{eq:pg1}--\eqref{eq:pg4} as a nonlinear system in the form of $$J(x)=0 \text{ in } \mathbb{R}^{Z},$$ with $Z=\mathrm{dim(\Vh)}^2\times\mathrm{dim}(\Xh)\times\mathrm{dim}(\Qh)$. In fact, writing $J(x)\cdot x$ is equivalent to testing the corresponding variational identities with $$x=(\mu_h^{n+1},\dtau\phi_h,\vv_h^{n+1},p_h^{n+1}).$$
	As a consequence of the latter, we thus obtain 
	\begin{align*}
		J(x)\cdot x &\geq y^{n+1} - Cy^n + \tau D^{n+1} - \tau C.
	\end{align*}
	Using the same arguments as we have used to derive the a-priori bound \eqref{eq:uniformhtaubounds}, we directly obtain $$J(x)\cdot x \to \infty \text{ for } |x| \to \infty.$$ 
	Thus, the existence of a solution follows from a corollary to Brouwer's fixed-point theorem, see \cite[Proposition~2.8]{Zeidler1}.\medskip

	\noindent\textbf{Step 5: Uniqueness of discrete solutions.}
	It remains to show the uniqueness of discrete solutions satisfying \cref{prob:ac2}. To this end, it is enough to consider one time step and assume that for given $\phi_h^n$ there exist at least two different solutions $(\phi_1^{n+1},\mu_1^{n+1},\vv_1^{n+1},p_1^{n+1})$ and $(\phi_2^{n+1},\mu_2^{n+1},\vv_2^{n+1},p_2^{n+1})$. Then the differences are denoted by $\phi^{n+1}:=\phi_1^{n+1}-\phi_2^{n+1}$, $\mu^{n+1}:=\mu_1^{n+1}-\mu_2^{n+1}$ , $\vv^{n+1}:=\vv_1^{n+1}-\vv_2^{n+1}$ and $p^{n+1}:=p_1^{n+1}-p_2^{n+1}$ and they satisfy the variational equations
	\begin{align}
		\la\phi_h^{n+1},\psi_h\ra_\Omega  - \la \phi_h^{n}\vv^{n+1},\nabla\psi \ra_\Omega + \gamma\la \phi_h^{n}\vv^{n+1}\cdot\nvec,\psi \ra_{\partial\Omega}+ \la m(\phi_h^n)\nabla\mu^{n+1},\nabla\psi_h \ra_\Omega &= 0, \label{eq:pgu1} \\[.2cm]
		\la \mu_h^{n+1}, \xi_h\ra_\Omega - \varepsilon^2\la \nabla\phi^{n+1},\nabla\xi_h \ra_\Omega -  \la \Psi'_\av(\phi_1)-\Psi'_\av(\phi_2),\xi_h \ra_\Omega &= 0, \label{eq:pgu2}\\[.2cm]
		\la (\alpha(\phi^n)\vv_h^{n+1} ,\ww_h\ra_\Omega + \la \beta(\phi^n)(|\vv_1^{n+1}|^{s-1}\vv_1^{n+1}-|\vv_2^{n+1}|^{s-1}\vv_2^{n+1}),\ww_h \ra  - \la p^{n+1},\div(\ww_h) \ra_\Omega& \notag\\
		+ \la \phi^n_h\nabla\mu^{n+1},\ww_h \ra_\Omega+ \gamma\la \phi^n_h\mu^{n+1},\div(\ww_h) \ra_\Omega - \gamma\la \phi_h^{n}\mu^{n+1},\ww_h\cdot\nvec \ra_{\partial\Omega}  &=0, \label{eq:pgu3}\\[.2cm]
		\la \div(\vv^{n+1}),q_h \ra_\Omega  &= 0, \label{eq:pgu4}
	\end{align}
	for every $(\psi_h,\xi_h,\ww_h,q_h)\in\Vh\times\Vh\times\Xh\times\Qh$. To measure the distance between both solutions, we introduce the relative energy
	\begin{align*}
		\mathcal{E}(\phi_1|\phi_2):= \frac{\varepsilon^2}{2}\norm{\nabla\phi}_{L^2}^2 + \la \Psi(\phi_1)-\Psi(\phi_2)-\Psi'(\phi_2)(\phi), 1\ra_\Omega  + \frac{\tilde c}{2}\norm{\phi}_{L^2}^2. 
	\end{align*}
	The constant $\tilde c$ is chosen so that $\mathcal{E}(\phi_1|\phi_2) \geq C\norm{\phi}_{H^1}^2$, see (A5) in \cref{Assumption2}. 
	The aim of this part is to show that $\mathcal{E}(\phi_1^{n+1}|\phi_2^{n+1}) \leq 0.$ For this, we consider the time difference between two time steps and compute
	\begin{align*}
		\frac{1}{\tau}\mathcal{E}(\phi_1^{n+1}|\phi_2^{n+1}) &= \frac{1}{\tau}(\mathcal{E}(\phi_1^{n+1}|\phi_2^{n+1})-\mathcal{E}(\phi_1^{n}|\phi_2^{n})) \\
		&= \varepsilon^2\la \nabla\dtau\phi,\nabla\phi^{n+1} \ra_\Omega + \la \Psi'_\av(\phi_1) - \Psi'_\av(\phi_2),\dtau\phi \ra + \tilde c\la \dtau\phi,\phi^{n+1} \ra_\Omega\\
		&\quad + \la \dtau\phi , \Psi'_\av(\phi_1) - \Psi'_\av(\phi_2) - \Psi''_\av(\phi_2)\phi\ra_\Omega - \tfrac{\tau\varepsilon^2}{2}\norm{\dtau\nabla\phi^{n+1}}_{L^2}^2 - \tfrac{\tilde c\tau}{2}\norm{\dtau\phi^{n+1}}_{L^2}^2 \\
		& = \varepsilon^2\la \nabla\dtau\phi,\nabla\phi^{n+1} \ra_\Omega + \la \Psi'_\av(\phi_1) - \Psi'_\av(\phi_2),\dtau\phi \ra + \tilde c\la \dtau\phi,\phi^{n+1} \ra_\Omega \\
		&\quad + \la \dtau\phi , \Psi'_\av(\phi_1) - \Psi'_\av(\phi_2) - \Psi''_\av(\phi_2)\phi\ra_\Omega - \mathcal{D}_\num^{n+1}\\
		& = (a) + (b) + (c) + (d) + (e).
	\end{align*}
	Here, we introduced the relative numerical dissipation $\mathcal{D}_\num^{n+1}$ as
	\begin{equation} \label{Def:NumDiss}
		\mathcal{D}_\num^{n+1} := \tfrac{\tau\varepsilon^2}{2}\norm{\dtau\nabla\phi^{n+1}}_{L^2}^2 + \tfrac{\tilde c\tau}{2}\norm{\dtau\phi^{n+1}}_{L^2}^2.  
	\end{equation}
	In the following, we either expand or estimate all the terms appearing. By inserting $\xi_h=\dtau\phi$ in \eqref{eq:pgu1} and $\psi_h=\mu^{n+1}$ in \eqref{eq:pgu2}, we expand $(a)$ and $(b)$ to
	\begin{align*}
		(a) + (b) = -\la m(\phi_h^n)\nabla\mu^{n+1},\nabla\mu^{n+1} \ra_\Omega + \la \phi_h^n\vv^{n+1},\nabla\mu^{n+1}\ra_\Omega - \gamma\la \phi_h^{n}\vv^{n+1}\cdot\nvec,\mu^{n+1} \ra_{\partial\Omega}.
	\end{align*}
	Furthermore, choosing $\ww_h=\vv^{n+1}$ in \eqref{eq:pgu3} and $q_h=p^{n+1}$ in \eqref{eq:pgu4}, it yields
	\begin{align*}
		\la\alpha(\phi_h^n)\vv^{n+1},\vv^{n+1}\ra_\Omega + \la \beta(\phi_h^n)(\snorm{\vv_1^{n+1}}\vv_1^{n+1}-\snorm{\vv_2^{n+1}}\vv_2^{n+1}), \vv^{n+1}\ra_\Omega + \la \phi_h^n\nabla\mu^{n+1},\vv^{n+1} \ra_\Omega = 0. 
	\end{align*}
	Then, applying inequality \eqref{ineq.2} shows 
	\begin{align*}
		(a) + (b) \leq -m_0\norm{\nabla\mu^{n+1}}_{L^2}^2 - \alpha_0\norm{\vv^{n+1}}_{L^2}^2 - 2^{1-s}\beta_0\norm{\vv^{n+1}}_{L^{s+1}}^{s+1}.
	\end{align*}
	For the term $(c)$, we select the test function $\psi_h=\phi^{n+1}$ in \eqref{eq:pgu1} to deduce that
	\begin{align*}
		(c) &=  -\tilde c\la m(\phi_h^n)\nabla\mu^{n+1},\nabla\phi^{n+1} \ra_\Omega + \tilde c\la \phi_h^n\vv^{n+1},\nabla\phi^{n+1}\ra_\Omega - \tilde c\gamma\la \phi_h^{n}\vv^{n+1}\cdot\nvec,\phi^{n+1} \ra_{\partial\Omega} \\
		& \leq \frac{m_0}{4}\norm{\nabla\mu^{n+1}}_{L^2}^2 + \tilde c\la \phi_h^n\vv^{n+1},\nabla\phi^{n+1}\ra_\Omega - \tilde c\gamma\la \phi_h^{n}\vv^{n+1}\cdot\nvec,\phi^{n+1} \ra_{\partial\Omega}
	\end{align*}
	Next, we estimate the remaining terms in $(c)$. We make a case distinction and first consider $\gamma=0$. We apply the discrete interpolation inequality, see \eqref{eq:discreteinftyinterp}, to deduce that
	\begin{align*}
		\la \phi_h^n\vv^{n+1},\nabla\phi^{n+1}\ra_\Omega &\leq  \norm{\vv^{n+1}}_{L^2}\norm{\nabla\phi^{n+1}}_{L^3}\norm{\phi_h^n}_{L^6} \\
		&\leq \frac{\alpha_0}{2}\norm{\vv^{n+1}}_{L^2}^2 + C\norm{\nabla\phi^{n+1}}_{L^3}^2\\
		&\leq  \frac{\alpha_0}{2}\norm{\vv^{n+1}}_{L^2}^2 + C\norm{\nabla\phi^{n+1}}_{L^2}^2 + \delta\norm{\Delta_h\phi^{n+1}}_{L^2}^2.
	\end{align*}
	Furthermore, in the case of $\gamma=1$, we obtain
	\begin{align*}
		\la \phi_h^n\vv^{n+1},\nabla\phi^{n+1}\ra_\Omega - \gamma\la \phi_h^{n}\vv^{n+1}\cdot\nvec,\phi^{n+1} \ra_{\partial\Omega} &= - \la \phi^{n+1}\vv^{n+1},\nabla\phi_h^{n} \ra_\Omega   \\
		&\quad\leq \norm{\vv^{n+1}}_{L^2}\norm{\nabla\phi^{n+1}}_{L^2}\norm{\phi_h^n}_{L^\infty} \\
		&\quad\leq \frac{\alpha_0}{2}\norm{\vv^{n+1}}_{L^2}^2 + C\norm{\nabla\phi^{n+1}}_{L^2}^2 + \delta\norm{\Delta_h\phi^{n+1}}_{L^2}^2.
	\end{align*}
	Hence, we consider the discrete Laplacian and deduce that
	\begin{align*}
		\norm{\Delta_h\phi^{n+1}}_{L^2}^2 &\leq C\norm{\mu^{n+1}}_{L^2}^2 + C\norm{\Psi'_\av(\phi^{n+1}_1)-\Psi'_\av(\phi^{n+1}_2)}_{L^2}^2 \\
		&\leq C\norm{\nabla\mu^{n+1}}_{L^2}^2 + C|\bar\mu^{n+1}|^2+ C\norm{\Psi'_\av(\phi^{n+1}_1)-\Psi'_\av(\phi^{n+1}_2)}_{L^2}^2 \\
		& \leq C\norm{\nabla\mu^{n+1}}_{L^2}^2 + C\norm{\Psi''_\av(\xi^{n+1})\phi^{n+1}}_{L^2}^2 \\
		& \leq C\norm{\nabla\mu^{n+1}}_{L^2}^2 + C\norm{\phi^{n+1}}_{H^1}^2.
	\end{align*}
	Choosing now $\delta = \frac{m_0}{4C}$, we find in total
	\begin{align*}
		(c) & \leq \frac{m_0}{2}\norm{\nabla\mu^{n+1}}_{L^2}^2 + \frac{\alpha_0}{2}\norm{\vv^{n+1}}_{L^2}^2 + C\norm{\phi^{n+1}}_{H^1}^2.  
	\end{align*}
	For the last term $(d)$, we use the numerical dissipation \cref{Def:NumDiss} and the growth estimates of $\Psi$, see (A5) in \cref{Assumption2}, to estimate
	\begin{align*}
		(d) &= \la \Psi'''_\av(\zeta)\phi^{n+1}, \dtau\phi^{n+1}(\phi_2^{n+1}-\phi_n^{n+1})\ra \\
		&\leq  \norm{\Psi'''_\av(\zeta)}_{L^6}\norm{\phi^{n+1}}_{L^6}(\norm{\phi_2^{n+1}}_{L^6} + \norm{\phi_2^{n}}_{L^6})\norm{\dtau\phi^{n+1}}_{L^6} \\
		&\leq \delta\norm{\dtau\phi^{n+1}}_{H^1}^2 +  C\norm{\Psi'''_\av(\zeta)}_{L^6}^2\norm{\phi^{n+1}}_{L^6}^2(\norm{\phi_2^{n+1}}_{L^6}^2 + \norm{\phi_2^{n}}_{L^6}^2 ) \\
		&\leq \delta\norm{\dtau\phi^{n+1}}_{L^2}^2 + C\norm{\phi^{n+1}}_{H^1}^2.    
	\end{align*}
	Finally, combining all previous estimates and using that $\mathcal{E}^n=0$, it yields
	\begin{align*}
		\mathcal{E}(\phi_1^{n+1}|\phi_2^{n+1}) &+ \frac{\tau}{2}\left(m_0\norm{\nabla\mu^{n+1}}_{L^2}^2 + \alpha_0\norm{\vv^{n+1}}_{L^2}^2 + 2^{1-s}\beta_0\norm{\vv^{n+1}}_{L^{s+1}}^{s+1} \right) \leq \tau C_1\mathcal{E}(\phi_1^{n+1}|\phi_2^{n+1})
	\end{align*}
	Under the assumed time step condition $\tau \leq \frac{1}{2C_1}:=C_u$, we find that
	\begin{align*}
		\frac{1}{2}\mathcal{E}(\phi_1^{n+1}|\phi_2^{n+1}) &+ \frac{\tau}{2}\left(m_0\norm{\nabla\mu^{n+1}}_{L^2}^2 + \alpha_0\norm{\vv^{n+1}}_{L^2}^2 + 2^{1-s}\beta_0\norm{\vv^{n+1}}_{L^{s+1}}^{s+1}\right) \leq 0.
	\end{align*}
	This estimate yields $\phi^{n+1}=0$, $\vv^{n+1}=0$ and $\nabla\mu^{n+1}=0$. Furthermore, using \eqref{eq:pgu2} with $\xi_h=1$, it shows that $\mu^{n+1}=0$. Lastly, we apply the inf-sup stability \eqref{eq:infsup} and \eqref{eq:pgu3} to obtain
	\begin{equation*}
		\norm{p^{n+1}}_{L^2} \leq \inf_{\ww_h\in\Xh} \frac{\la p^{n+1},\div(\ww_h) \ra_\Omega}{\norm{\ww_h}_{\Xh}} = 0.  
	\end{equation*}
	Thus, we conclude that both solutions coincide.
\end{proof}

\section{Numerical simulations} \label{Sec:Simul}
The scheme of the previous section (see \cref{prob:ac2}) is implemented in NGSolve \cite{schberl2014c++11}. The nonlinear system is treated using Newton's method with an absolute tolerance of $10^{-11}$, while the linear system is solved by a direct solver. We consider the two-dimensional domain $\Omega = [0,1]^2$ and denote by $(x,y)$ a point in the domain. In the experiments, we fix $\eps^2 = 10^{-4}$ and use the following functions:
\begin{align*}
	\Psi(\phi) &= \frac{1}{4}\phi^2(1-\phi)^2, & m(\phi)&=10^{-2} + \phi^2(1-\phi)^2\max\{0,1-\phi^2\}, \\
	\alpha(\phi) &= \alpha_1\phi+\alpha_2(1-\phi), & \beta(\phi) &= \beta_1\phi+\beta_2(1-\phi),
\end{align*}
where $\alpha(\phi)$ and $\beta(\phi)$ are extended outside $[0,1]$ by their constant upper or lower limit. With the above choice of the potential function, its time average can be exactly computed as
\begin{equation*}
	\Psi'_{av}(\phi_h):= \frac{1}{6}\left( \Psi'(\phi_h^{n+1}) + \frac{4}{6}\Psi'\left(\frac{\phi_h^{n+1}+\phi_h^{n}}{2}\right)+\Psi'(\phi_h^{n})\right).  
\end{equation*}

\subsection{Convergence test and inconsistent volume source}
We first consider the parameters $\alpha_1=10^{-2}$, $\alpha_2=10^0$, $\beta_1=10^{-1}$, $\beta_2=10^1$, along with the initial condition and parametric functions
\begin{align}
	\phi_0(x,y) &= \frac{1}{2} - \frac{1}{2}\tanh\left(\frac{\sqrt{1.1(x-0.5)^2 + 0.8(y-0.5)^2}-0.25}{\sqrt{10\varepsilon^2}}\right), \label{eq:exp1}\\
	\Gamma_\phi(\phi)&=\frac{1}{5}\phi\max\{0,1-\phi^2\}, \qquad  \Gamma_\vv(\phi) =2\phi\max\{0,1-\phi^2\}. \notag
\end{align}

\noindent\textbf{Convergence test:}  
Since no exact solution is available, we compute the error using a refined solution for a fixed $\tau = 5\times10^{-3}$. The error quantities are defined as
$$\begin{alignedat}{2}
	\text{err}(\phi,h_k)&:= \max_{n\in\Itau}\norm{\phi^n_{{h_k}}-\phi^n_{h_{k+1}}}_{H^1}^2, \hspace{1.2cm} 
	\text{err}(\mu,h_k)&:=\tau\sum_{n=1}^{n_T}\norm{\mu^n_{h_k}-\mu^n_{h_{k+1}}}_{H^1}^2 , \\
	\text{err}(\vv,h_k)&:=\tau\sum_{n=1}^{n_T}\norm{\vv^n_{{h_k}}-\vv^n_{h_{k+1}}}_{L^{2}_\div}^2,  \hspace{1cm}
	\text{err}(p,h_k)&:=\tau\sum_{n=1}^{n_T}\norm{p_{h_k}^n-p^n_{h_{k+1}}}_{L^2}^2,
\end{alignedat}$$
where mesh refinements use $h_k \approx 2^{-1-k}$ for $k=0,\ldots,6$. \cref{table1} and \cref{table2} present the errors and squared experimental orders of convergence (eoc) for $\gamma=0$ and $\gamma=1$, respectively. As expected, we observe first-order convergence for $(\phi, \mu)$ and second-order convergence for $(\vv, p)$. These rates are order-optimal, confirming the robustness of the scheme despite nonlinear coupling in the Forchheimer subsystem. 

\begin{table}[H]
	\centering
	\small
	\caption{Energy errors and squared experimental order of convergence (eoc) for $s=3$ and $\gamma=1$; first-order convergence for $(\phi, \mu)$ (theoretically optimal in $H^1$), second-order convergence for $(\vv, p)$.} 
	\begin{tabular}{|c||c|c|c|c|c|c|c|c|}
		\hline
		$ k $ & $\text{err}(\phi,h_k)$ &  eoc & $\text{err}(\mu,h_k)$ & eoc & $\text{err}(\vv,h_k)$ &  eoc & $\text{err}(p,h_k)$ & eoc   \\
		\hline
		1& $5.50\cdot 10^{+0}$ &  $\phm0.00$ &  $3.65\cdot 10^{-2}$ &  $\phm0.00$ &  $6.59\cdot 10^{-1}$ &  $0.00$ &  $2.91\cdot 10^{-3}$ &  $0.00$ \\
		2&$1.54\cdot 10^{+1}$ &  $-1.49$ &  $8.45\cdot 10^{-2}$ &  $-1.21$ &  $2.14\cdot 10^{-1}$ &  $1.62$ &  $7.69\cdot 10^{-4}$ &  $1.92$ \\  
		3&$3.40\cdot 10^{+1}$ &  $-1.14$ &  $9.72\cdot 10^{-2}$ &  $-0.20$ &  $1.29\cdot 10^{-1}$ &  $0.73$ &  $8.87\cdot 10^{-5}$ &  $3.12$ \\
		4&$1.31\cdot 10^{+1}$ &  $\phm1.38$ &  $3.47\cdot 10^{-2}$ &  $\phm1.49$ &  $2.22\cdot 10^{-2}$ &  $2.54$ &  $5.90\cdot 10^{-6}$ &  $3.91$ \\
		5&$2.79\cdot 10^{+0}$ &  $\phm2.23$ &  $7.48\cdot 10^{-3}$ &  $\phm2.21$ &  $2.19\cdot 10^{-3}$ &  $3.35$ &  $5.01\cdot 10^{-7}$ &  $3.56$ \\
		6&$5.16\cdot 10^{-1}$ &  $\phm2.43$ &  $1.57\cdot 10^{-3}$ &  $\phm2.25$ &  $1.37\cdot 10^{-4}$ &  $4.00$ &  $3.16\cdot 10^{-8}$ &  $3.99$ \\
		\hline
	\end{tabular}
	\label{table1}
\end{table}

\begin{table}[H]
	\centering
	\small
	\caption{Energy errors and squared experimental order of convergence (eoc) for $s=3$, $\gamma=1$; rates match $\gamma=0$ case, confirming boundary condition robustness.} 
	\begin{tabular}{|c||c|c|c|c|c|c|c|c|}
		\hline
		$ k $ & $\text{err}(\phi,h_k)$ &  eoc & $\text{err}(\mu,h_k)$ & eoc & $\text{err}(\vv,h_k)$ &  eoc & $\text{err}(p,h_k)$ & eoc   \\
		\hline
		1 &$5.50\cdot 10^{+0}$ &  $\phm0.00$ &  $3.61\cdot 10^{-2}$ &  $\phm0.00$ &  $6.41\cdot 10^{-1}$ &  $0.00$ &  $3.05\cdot 10^{-3}$ &  $0.00$ \\
		2 &$7.26\cdot 10^{+0}$ &  $-0.40$ &  $8.46\cdot 10^{-2}$ &  $-1.23$ &  $1.94\cdot 10^{-1}$ &  $1.73$ &  $6.13\cdot 10^{-4}$ &  $2.32$ \\
		3 &$2.53\cdot 10^{+1}$ &  $-1.80$ &  $8.85\cdot 10^{-2}$ &  $-0.06$ &  $1.18\cdot 10^{-1}$ &  $0.71$ &  $2.91\cdot 10^{-5}$ &  $4.40$ \\
		4 &$1.23\cdot 10^{+1}$ &  $\phm1.03$ &  $2.86\cdot 10^{-2}$ &  $\phm1.63$ &  $2.19\cdot 10^{-2}$ &  $2.44$ &  $1.66\cdot 10^{-6}$ &  $4.13$ \\
		5 &$2.71\cdot 10^{+0}$ &  $\phm2.19$ &  $6.28\cdot 10^{-3}$ &  $\phm2.19$ &  $2.17\cdot 10^{-3}$ &  $3.34$ &  $1.27\cdot 10^{-7}$ &  $3.71$ \\
		6 &$5.00\cdot 10^{-1}$ &  $\phm2.44$ &  $1.29\cdot 10^{-3}$ &  $\phm2.28$ &  $1.36\cdot 10^{-4}$ &  $4.00$ &  $7.69\cdot 10^{-9}$ &  $4.04$ \\
		\hline
	\end{tabular}
	\label{table2}
\end{table}

\noindent\textbf{Simulations:}  
We numerically perform the above experiment for $s \in \{1,2,3\}$ and $\gamma\in\{0,1\}$. The results are depicted in \cref{fig:exp1} for $\gamma=0$ and \cref{fig:exp1b} for $\gamma=1$. In all experiments, the initial ellipse expands toward the boundary of the domain, while its core breaks up due to the volume source. This results in propagating interface fronts until the $\phi=1$ phase reaches the boundary. At this stage, the solutions for $\gamma=0$ and $\gamma=1$ diverge fundamentally. For $\gamma=1$, see \cref{fig:exp1b}, the propagation front continues until the domain is dominated by the $\phi=0$ phase. However, for $\gamma=1$, see \cref{fig:exp1}, the phase-field exceeds its physical range, forming a boundary layer with $\phi\geq 1$. This "blowup" phenomenon suggests that the system with Neumann-like boundary conditions imposes additional constraints on the choice of $\Gamma_\vv$.

The Forchheimer nonlinearity ($s>1$) also influences the dynamics of the phase-field: for $s=1$ (Darcy flow), the evolution is faster due to weaker velocity damping, while higher $s$ ($s=2,3$) slows propagation.
\begin{figure}[H]
	\centering
	\includegraphics[page=1,width=.9\textwidth]{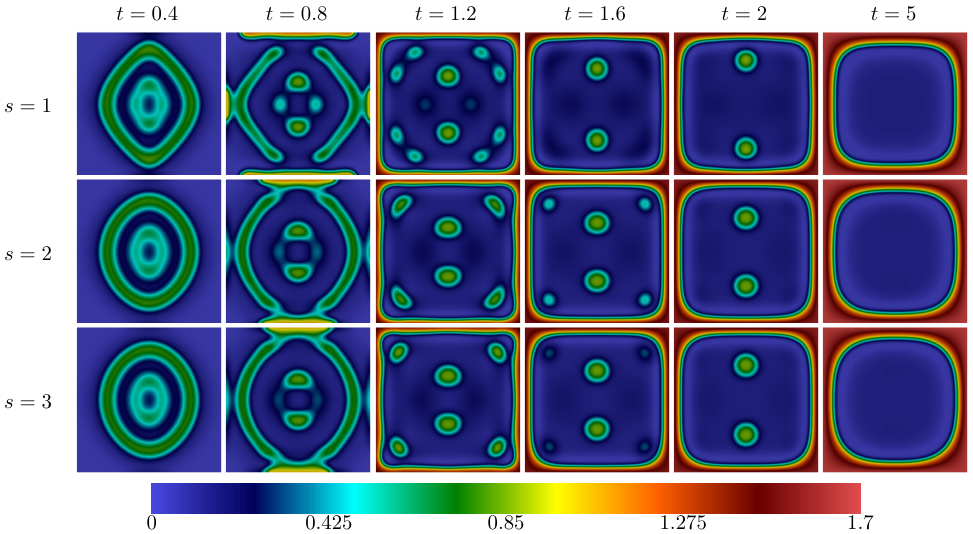}
	\caption{Phase-field $\phi$ evolution with $\gamma=0$ for $s \in \{1,2,3\}$ (top to bottom) at times $t\in\{0.4,0.8,1.2,1.6,2,5\}$ (left to right); mass leakage via Neumann-like conditions causes unphysical boundary layers with $\phi\geq1$.
	}
	\label{fig:exp1}
\end{figure}

\begin{figure}[H]
	\centering
\includegraphics[page=2,width=.9\textwidth]{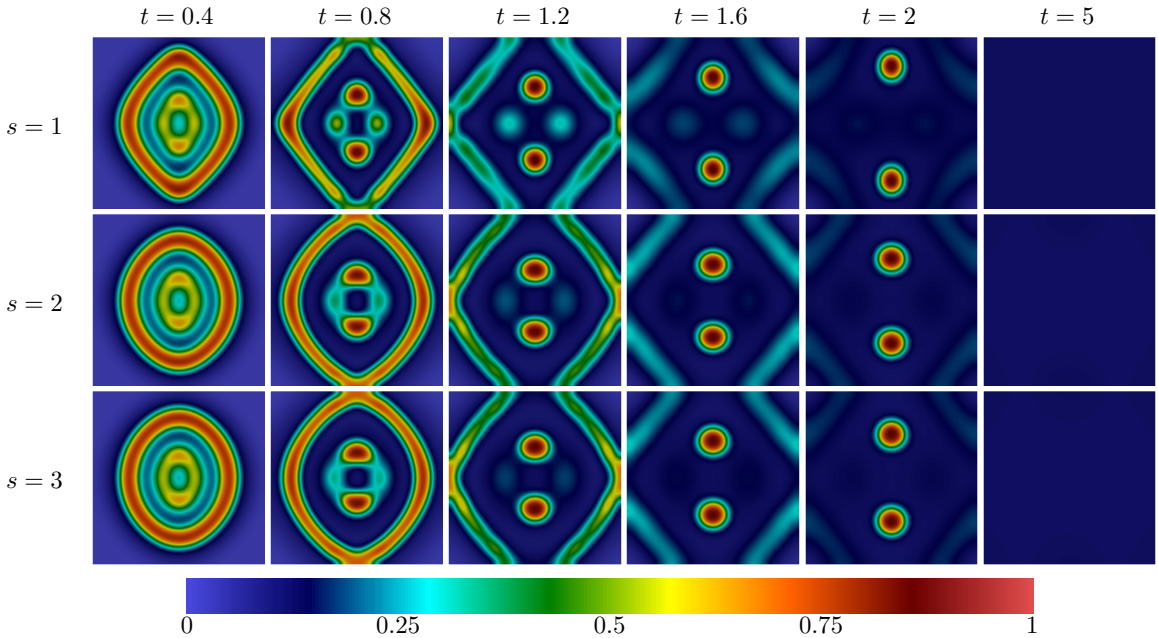}
	\caption{Phase-field $\phi$ evolution with $\gamma=1$ for $s \in \{1,2,3\}$ (top to bottom) at times $t\in\{0.4,0.8,1.2,1.6,2,5\}$ (left to right); boundary conditions enable mass influx, driving domain invasion.}
	\label{fig:exp1b}
\end{figure}

\Cref{fig:meta1} shows the evolution of mass and energy, while \cref{fig:meta1error} confirms the structure-preserving properties of the numerical scheme. For $\gamma=0$, the phase-field's mass increases strictly, while for $\gamma=1$, the mass decreases after a plateau due to boundary leakage. The energy dissipation error remains non-positive, aligning with theoretical predictions. Mass conservation balance is preserved up to machine precision, validating the exact mass balance of the scheme.

\begin{figure}[H]
	\centering
\includegraphics[page=3,width=.9\textwidth]{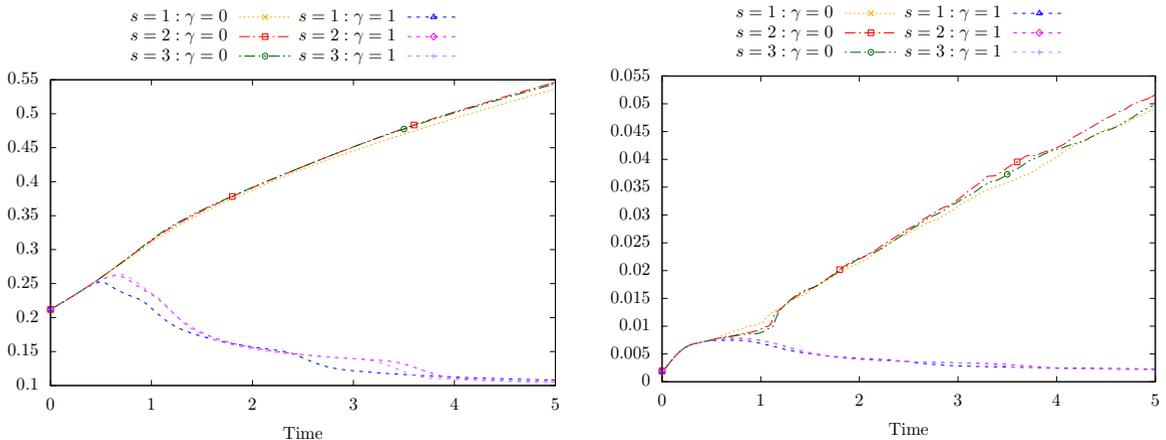}
	\caption{Evolution of the mass (left) and energy (right);  
		strict mass increase ($\gamma=0$) vs. leakage ($\gamma=1$) and energy decay aligns with theoretical dissipation.}
	\label{fig:meta1}
\end{figure}

\begin{figure}[H]
	\centering
\includegraphics[page=4,width=.9\textwidth]{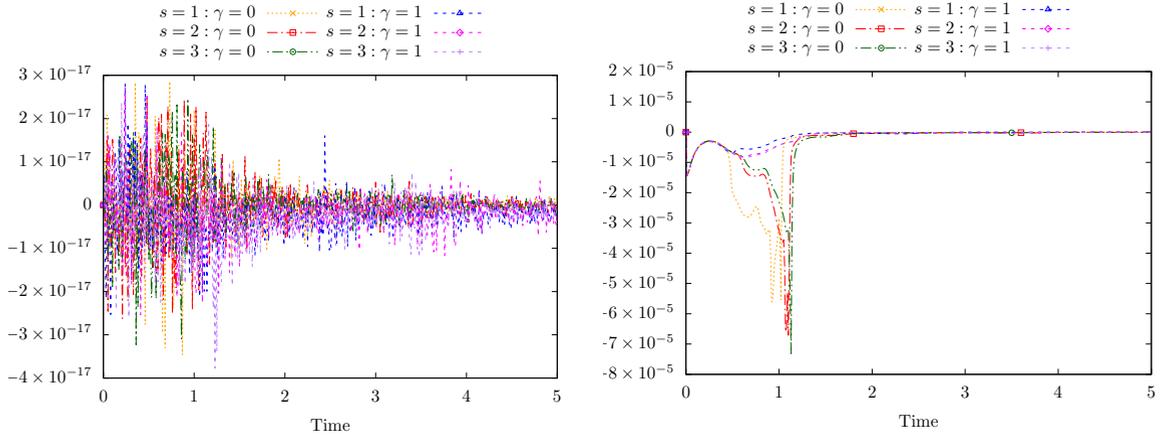}
	\caption{Structure-preserving properties of the scheme; machine-precision mass balance errors (left) and non-positive energy dissipation error (right), confirming discrete entropy stability.}
	\label{fig:meta1error}
\end{figure}

\subsection{Consistent volume source}
To address boundary-driven blowup, we consider a consistent mass source with initial data and parametric functions
\begin{align}
	\phi_0(x,y) &= \frac{1}{2} - \frac{1}{2}\tanh\left(\frac{\sqrt{1.1(x-0.5)^2 + 0.8(y-0.75)^2}-0.25}{\sqrt{10\varepsilon^2}}\right), \label{eq:exp2}\\
	\Gamma_\phi&=0, \qquad  \Gamma_\vv=\phi-\la \phi_0,1\ra_\Omega. \notag
\end{align}

\Cref{fig_meta2} shows mass and energy evolution, while \cref{fig_meta2error} confirms the structure-preserving properties. The mass balance error is at machine precision, and the energy dissipation error remains non-positive for all cases. In the case of $\gamma=0$ as expected mass is conserved.

\begin{figure}[H]
	\centering
	\includegraphics[page=6,width=.9\textwidth]{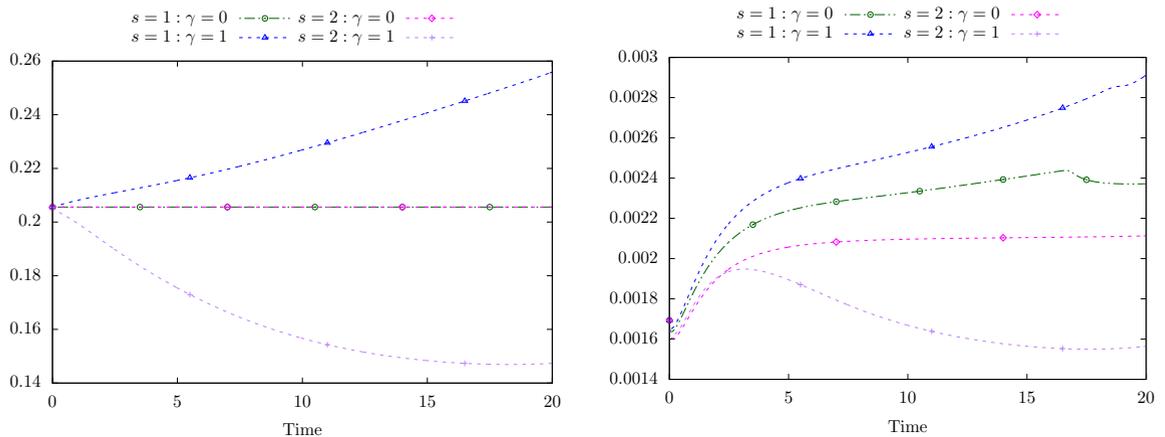}
	\caption{Mass (left) and energy evolution (right) for consistent source; strict conservation for $\gamma=0$ vs. source-driven growth/decay for $\gamma=1$ (left)
		and energy dissipation remains monotonic (right).}
	\label{fig_meta2}
\end{figure}

\begin{figure}[H]
	\centering
	\includegraphics[page=7,width=.8\textwidth]{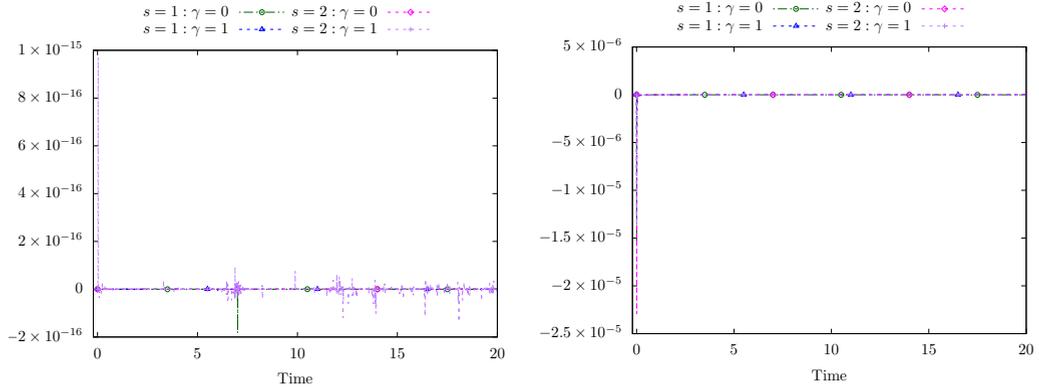}
	\caption{Error balances for consistent source; mass errors at machine precision (left) and  energy dissipation errors $\leq 0$ (right), reinforcing scheme robustness.}
	\label{fig_meta2error}
\end{figure}

The simulation results are depicted in \cref{fig:exp2}. For $\gamma=0$, the initial ellipse that touches the boundary evolves into a droplet that detaches and migrates toward the center of the domain. However, for $\gamma=1$, the droplet remains attached to the boundary, forming a secondary interface. This contrast arises because $\Gamma_\vv$ is mean-free only for $\gamma=0$, ensuring mass conservation. For $\gamma=1$, mass leakage persists, but the phase-field remains within $[0,1]$, avoiding blow-up. 

\begin{figure}[H]
	\centering
\includegraphics[page=5,width=.85\textwidth]{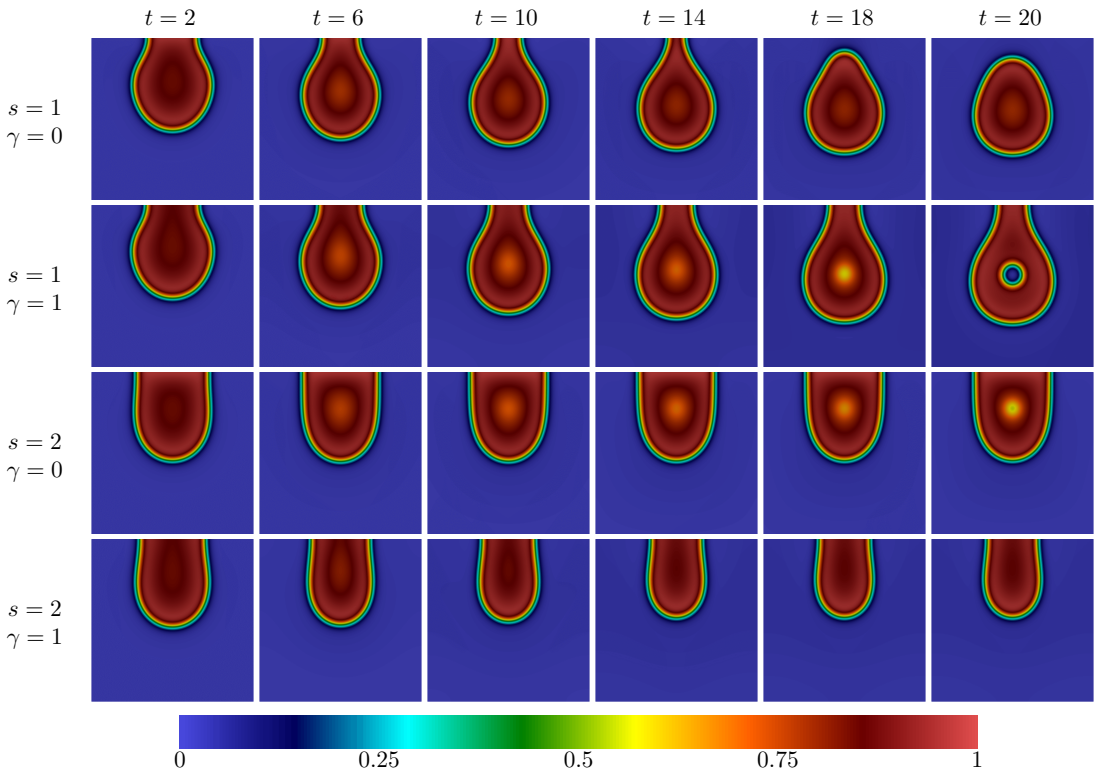}
	\caption{Phase-field $\phi$ with consistent mass source at times $t\in\{2,6,10,14,18,20\}$ (left to right) in the cases $s \in \{1,2\}$ and $\gamma\in \{0,1\}$ as indicated on the left column.}
	\label{fig:exp2}
\end{figure}

\section{Conclusion and outlook}

We analyzed a coupled Cahn--Hilliard--Forchheimer system with concentration-dependent mobility, nonlinear transport, and solution-dependent mass/volume sources. Key achievements include a new well-posedness result for the generalized Forchheimer subsystem, the existence of weak solutions for the fully coupled system through energy-controlled Galerkin approximations and nonlinear compactness techniques, a fully discrete scheme that maintains exact mass conservation and discrete energy dissipation, and the numerical validation of optimal convergence rates and geometric structure inheritance.

In future work, we would like to study the case of degenerate mobility and singular potentials for both boundary conditions $\gamma \in \{0,1\}$. In particular, we are interested in the restrictions that arise for the volume source term. The numerical simulations indicate that in certain configurations additional constraints on the volumetric source may arise. To retain a phase-dependent source term, this might result in nonlocal source terms.

\appendix

\section{Proof of \cref{Lem:Forchheimer}}
\label{Appendix}

\noindent\textbf{Step 0: Key inequalities.} For all vectors $x,y \in \mathbb{R}^d$ and $s>1$, we have:
\begin{align}
	||x|^{s-1}x - |y|^{s-1}y| &\leq s 2^{s-1} (|x|^{s-1} + |y|^{s-1})|x - y| \label{ineq.1} \\
	(|x|^{s-1}x - |y|^{s-1}y)\cdot(x - y) &\geq 2^{1-s}|x - y|^{s+1} \label{ineq.2} \\
	||x|^{(1-s)/s}x - |y|^{(1-s)/s}y| &\leq 2^{1-1/s}|x - y|^{1/s} \label{ineq.3} \\
	\frac{|x - y|^2}{|x|^{1-1/s} + |y|^{1-1/s}} &\leq s \left(\frac{x}{|x|^{1-1/s}} - \frac{y}{|y|^{1-1/s}}\right)(x - y) \label{ineq.4}
\end{align}
These inequalities are standard in the analysis of the generalized Forchheimer equation, see also \cite{fabrie1989regularity,kieu2020existence}. The first inequality can be proved by applying the mean value theorem to $t \mapsto |t|^{s-1}t$. The second follows by convexity of $t \mapsto |t|^{s+1}$. The third by Hölder continuity of $t \mapsto |t|^{(1-s)/s}t$. The fourth follows by an application of the Cauchy--Schwarz inequality. \medskip

\noindent\textbf{Step 1: Variational form.} Let $V := L^{1+s}(\Omega)^d$, $V_\div := L^{1+s}_\div(\Omega)^d$, and $Q := L^{1+1/s}(\Omega)$. We want to find a pair $(\vv,p) \in V_\div \times Q$ such that
\begin{equation} 			\label{mix.form.stat.pr}
	\begin{array}{rcl@{\quad}l}
		a(\vv,\u) - b(\u,p) & = & F(\u) 
		& \mbox{for all } \u \in V_\div , \\[0.5ex]
		b(\vv,q) & = & G(q) 
		& \mbox{for all } q \in Q,
	\end{array}
\end{equation} 
where we introduced the continuous linear forms 
$F: V_\div \to \R$ and $G: Q \to \R$, the bilinear form 
$b: V_\div \times Q \to \R$
and the nonlinear form 
$a: V \times V 
\to \R$
by
$$\begin{aligned} F(\u) &:= (\ff,\u)_\Omega - (h,\u \cdot \nvec)_{\p\Omega} \quad
	&&\mbox{for } \u \in V_\div, \\
	G(q) &:= (g,q)_\Omega \quad
	&&\mbox{for } q \in Q  , \\
	b(\u,q) &:= (\div(\u),q)_\Omega
	&&\mbox{for } \u \in V_\div  , q \in Q, \\
	a(\vv,\u) &:= ( \alpha + \beta |\vv|^{s-1} ,
	\vv \cdot \u)_\Omega 
	&&\mbox{for } \vv,\u \in V . \end{aligned}$$
We note that the trace operator $\gamma: \u \mapsto \u \cdot \nvec$ is indeed linear and continuous from $V_\div$ to $(W^{1/(s+1),1+1/s}(\p\Omega))'$ by an extension argument.
Since $a$ is linear with respect to its second variable,
we can define a mapping $A: V \to V'$ by 
$\langle Av , u \rangle = a(\vv,\u)$.
To show continuity of $A$, we consider 
$\vv_1,\vv_2 \in V$. 
Applying H\"older's inequality we obtain 
$$ | \langle A\vv_1 - A\vv_2 , \u \rangle | \lesssim
( \|\vv_1-\vv_2\|_V
+  
\| |\vv_1|^{s-1}\vv_1-|\vv_2|^{s-1}\vv_2 \|_Q
) \|\u\|_V $$
for all $\u \in V$,
which proves
\begin{equation}			\label{norm(Au)}
	\| A\vv \|_{V'} \lesssim \| \vv\|_V + 
	\| \vv\|_V^s  .
\end{equation}
Applying inequality (\ref{ineq.1}) and again H\"older's inequality yields
$$ \| A \vv_1 - A \vv_2 \|_{V'} \lesssim
( 1 + 
\|\vv_1\|_V^{s-1} + \|\vv_2\|_V^{s-1}  )
\| \vv_1 - \vv_2 \|_V  . $$
Using inequality (\ref{ineq.2}) we obtain
\begin{equation}				\label{A:mon}
	\langle A\vv - Au , \vv -\u \rangle \gtrsim
	\|\vv -\u\|_V^{s+1} \quad 
	\mbox{for} \quad \vv, \u \in V.
\end{equation}
Therefore, $A$ is strictly monotone on $V_\div$, 
and uniformly monotone (and thus coercive) for $V$. \bigskip

\noindent\textbf{Step 2: Regularization.}
For $n > 0$, we define a regularization by looking for a tuple $(\vv_n,p_n) \in V_\div \times Q$ such that
\begin{equation}			\label{reg.stat.pr}
	\begin{array}{rcl}
		a(\vv_n,\u) + d_n(\vv_n,\u) - b(\u,p_n)
		& = & F(\u) \quad \mbox{for all } u \in V_\div  , \\	
		c_n(p_n,q) + b(\vv_n,q) & = & G(q)
		\quad \mbox{for all } q \in Q,
	\end{array}
\end{equation} where the nonlinear forms $d_{n}: V_\div \times V_\div \to \R$
and $c_n: Q \times Q \to \R$ are given by
$$\begin{aligned} d_{n} (\vv,\u) &:= 
	\frac{1}{n} (|\Div(\vv)|^{s-1} \Div(\vv), \Div(\u))_\Omega
	&&\mbox{ for } \vv, \u \in V_\div, \\
	c_n(p,q) &:= 
	\frac{1}{n} (|p|^{(1-s)/s}p,q)_\Omega
	&&\mbox{ for } p,q \in Q . 
\end{aligned}$$

We observe that the operator $A_n : V_\div \to (V_\div)'$, $A_n \vv_n:=a(\vv_n,\cdot)+d_n(\vv_n,\cdot)$, is continuous, coercive 
and strictly monotone on $V_\div$. Indeed,
$$\begin{aligned} &| d_n(\vv_1,\u) - d_n(\vv_2,\u) | \\&\le
	\frac{s2^{s-1}}{n} ( \| \Div(\vv_1) \|_V^{s-1} 
	+ \| \Div(\vv_2) \|_V^{s-1} )
	\| \Div(\vv_1) - \Div(\vv_2) \|_V
	\| \Div(\u) \|_V
\end{aligned}$$
for all $\vv_1,\vv_2,\u \in V_\div$, which is obtained by means 
of H\"older's inequality and of (\ref{ineq.1}).
Furthermore, an application of (\ref{ineq.2}) yields
$$ d_n(\vv,\vv-\u) - d_n(\u,\vv-\u) \ge
\frac{2^{1-s}}{n} \| \Div(\vv - \u) \|_V^{s+1} 
\quad \mbox{for all } \vv, \u \in V_\div  . $$
Together with (\ref{A:mon}) this inequality implies 
the uniform monotonicity of $A_n$:
$$ \langle A_n \vv - A_n \u , \vv - \u \rangle_{V \times V'}
\ge 2^{1-s} \min \left( C(\beta_0) , \frac{1}{n} \right) 
\| \vv-\u \|_V^{s+1} \quad \mbox{for all } \vv, \u \in V  . $$
The coercivity and strict monotonicity of $A_n$ are direct consequences
of the uniform monotonicity.

Furthermore, the operator $C_n : Q \to Q'$, $C_n p_n:=c_n(p_n,\cdot)$, is continuous, coercive
and strictly monotone on $Q$. Indeed, using H\"older's inequality and (\ref{ineq.3}) we obtain
the continuity of $C_n$:
$$ \| C_n p - C_n q \|_{Q'} \le
\frac{1}{n} \left\| \frac{p}{|p|^{1-1/s}} - \frac{q}{|q|^{1-1/s}} 
\right\|_V \le
\frac{2^{1-1/s}}{n} \| p - q \|_Q^{1/s}
\quad \mbox{for all } p, q \in Q  . $$
The coercivity follows from the equation
$$ \langle C_n q , q \rangle_{Q' \times Q} = 
\frac{1}{n} (q^2, |q|^{(1-s)/s})_\Omega =
\frac{1}{n} \|q\|_Q^{(s+1)/s}  , $$
and strict monotonicity 
from the strict monotonicity of $x \mapsto |x|^{-1+1/s} x$:
$$ \langle C_n p - C_n q , p - q \rangle_{Q' \times Q} = 
\frac{1}{n} \int_\Omega 
\left( \frac{p}{|p|^{1-1/s}} - \frac{q}{|q|^{1-1/s}} \right) (p-q) \, dx
> 0 $$
for all $p,q \in Q$ with $p \neq q$. \medskip

\noindent\textbf{Step 3: Well-posedness of regularization.} Adding the left-hand sides of (\ref{reg.stat.pr}), we obtain
the following nonlinear form defined on $V_\div \times Q$:
$$ \mathbf{a}_n  ( (\vv,p),(\u,q) ) := a(\vv,\u) 
+ d_n(\vv,\u) - b(\u,p) + c_n(p,q) + b(\vv,q), $$
for  $(\vv,p), (\u,q) \in V_\div \times Q$
and a nonlinear operator $\mathcal{A}_n : (V_\div \times Q) \to (V_\div \times Q)'$
defined by
$$\langle \mathcal{A}_n (\vv,p) , (\u,q) \rangle = 
\mathbf{a}_n  ( (\vv,p),(\u,q) ).$$
Since $A_n$, $C_n$, $B$ and $B'$ are continuous,
$\mathcal{A}_n$ is continuous, too.
Furthermore, $\mathcal{A}_n$ is coercive and strictly monotone.
Thus we can apply the theorem of Browder--Minty, see \cite[Theorem 2.18]{roubicek}, to show that for every 
$\mathcal{F} \in (V_\div \times Q)'$ there exists a unique solution 
$(\vv_n , p_n) \in V_\div \times Q$ of the operator equation
$\mathcal{A}_n (\vv_n , p_n) = \mathcal{F}$. \bigskip

\noindent\textbf{Step 4: Uniform bounds.}
Using the second equation of (\ref{reg.stat.pr}) we obtain
$$\begin{aligned} \|\Div(\vv_n)\|_V = \|\Div(\vv_n)\|_{Q'} = 
	\sup_{q \in Q} \frac{| b(\vv_n,q) |}{\|q\|_Q} &= 
	\sup_{q \in Q} \frac{| G(q) - c_n(p_n,q) |}{\|q\|_Q}
	\\&\le \| f \|_V + n \|p_n\|_Q^{1/s}  
\end{aligned}$$
The estimation of $\|\vv_n\|_V$ follows from
the first equation in (\ref{reg.stat.pr}):
$$\begin{aligned}
	C(\beta_0) \|\vv_n\|_V^{s+1} 
	&\le  (\beta |\vv_n|^{s-1}, 
	\vv_n \cdot v_n )_\Omega
	\\&\le a(\vv_n,\vv_n) 
	+ d_n (\vv_n,\vv_n) \\
	&= g(\vv_n) + b(\vv_n,p_n) \\
	&\le  \|g\|_{V'} \|\vv_n\|_V
	+ ( \|g\|_{V'} + \|p_n\|_Q )
	\|\Div(\vv_n)\|_V  .
\end{aligned}$$
Together with the estimate for $\|\Div(\vv_n)\|_V$ 
this yields
\begin{equation}			\label{reg.stat.pr:m-Ls+1-est}
	\begin{aligned}
		\|\vv_n\|_V^{s+1} &\le 
		\frac{1}{C(\beta_0)}
		( \|g\|_{V_\div'} \|\vv_n\|_V
		+ \|g\|_{V_\div'} \|f\|_V
		\\&\qquad + n \|g\|_{V_\div'} \|p_n\|_Q^{1/s}
		+ \|f\|_V \|p_n\|_Q 
		+ n \|p_n\|_Q^{1+1/s} )  .
	\end{aligned}
\end{equation}
To bound $p_n$, we employ the inf-sup condition, that is, there exists $\theta > 0$ such that:
\[
\sup_{\vv \in V_\div} \frac{b(\vv,q)}{\|\vv\|_{V_\div}} \geq \theta \|q\|_Q \quad \forall\, q \in Q.
\]
Together with the first equation in (\ref{reg.stat.pr}) and
the above estimate for $\|\Div(\vv_n)\|_V$, we obtain 
for sufficiently large $n$ ($\frac{1}{n} < \theta^{s/(s+1)}$ is enough)
$$ \|p_n\|_Q^{1/s} \le 
\left( \frac{2 \frac{1}{n^s}}{\theta-\frac{1}{n^{s+1}}} \|f\|_V 
+ \left( \frac{1}{\theta-\frac{1}{n^{s+1}}}
\Big( \|A \vv_n\|_{V_\div'} + \|g\|_{V_\div'} +
n \|f\|_V^s \Big) \right)^{1/s} \right)  . $$
Thus, there exists a constant $C_1 < \infty$, 
independent of $n$, such that
$\|\vv_n\|_V \le C_1$
for $\frac{1}{n} \le \frac{1}{\overline{n}}$.
The bounds for $\|p_n\|_Q$ and $\|\vv_n\|_{V_\div}$ follow. Thus, exist constants $C_{v}, C_p$, 
independent of $n$, such that for sufficiently large $n > 0$
the solution $(\vv_n, p_n)$ of \eqref{reg.stat.pr}
satisfies the following estimates:
\begin{equation}			\label{reg.stat.pr:est}
	\|\vv_n\|_{V_\div} \le C_{v}  , \quad
	\|p_n\|_Q \le C_p  .
\end{equation}

\noindent\textbf{Step 5: Limit process.}
Since $( \vv_n,p_n )_{n \in \N}$ is a bounded sequence
in $V_\div \times Q$, there exists a weakly convergent subsequence, 
again denoted by $( (\vv_n,p_n) )_{n \in \N}$,
with (weak) limit $(\vv,S) \in V_\div \times Q$. 
As it holds
$$\begin{aligned}
	&\| \mathcal{A}(\vv_n,p_n) 
	- \mathcal{F} \|_{(V_\div \times Q)'} \\
	& =  \sup_{(\u,q) \in V_\div \times Q} 
	\frac{ | \mathbf{a} ( (\vv_n,p_n) , (\u,q) )
		- \mathcal{F}(\u,q) |}
	{\|(\u,q)\|_{V_\div \times Q}} \\
	& =  \sup_{(\u,q) \in V_\div \times Q} 
	\frac{ | \mathbf{a}_{n} ( (\vv_n,p_n) , (\u,q) )
		- d_{n}(\vv_n,\u) - c_{n}(p_n,q) 
		- \mathcal{F}(\u,q) |}
	{\|(\u,q)\|_{V_\div \times Q}} \\
	& \le  \frac{1}{n} ( \|\Div(\vv_n)\|_V^s
	+ \| p_n \|_Q^{1/s} ), 
\end{aligned}$$
we deduce that it converges to zero and thus,
the sequence $( \mathcal{A}(\vv_n,p_n) )_{n \in \N}$
converges strongly in $V_\div'$ to $\mathcal{F}$.
Thus, we can conclude that $\mathcal{A} (\vv,p) = \mathcal{F}$ 
in $(V \times Q)'$, i.e.,
$(\vv,p)$ is a solution of (\ref{mix.form.stat.pr}). We note that we obtain the higher regularity of $p \in W^{1+1/s}(\Omega)$ by solving for $\nabla p$.

To show uniqueness, we consider two solutions
$(\vv_1,p_1)$ and $(\vv_2,p_2)$ of (\ref{mix.form.stat.pr}).
Using the test functions $\ww = \vv_1 - \vv_2$
and $q = p_1 - p_2$ we obtain
$$ a(\vv_1,\vv_1-\vv_2) - a(\vv_2,\vv_1-\vv_2)
= \langle A \vv_1 - A \vv_2 , \vv_1-\vv_2 \rangle_ = 0  . $$
Since $A$ is strictly monotone, it follows $\vv_1-\vv_2=0$.  If $\vv \in V_\div$ is given, $p \in Q$ is defined as solution of the variational equation
$b(\u,p) = F(\u) - a(\vv,\u)$ for all $\u \in V_\div$.
Therefore, the uniqueness of $p$ is a direct consequence of the injectivity
of the operator $B': Q \to V_\div'$. 
\qed

\section*{Acknowledgements}
\noindent MF is supported by the state of Upper Austria. AB is supported by the Gutenberg Research College Mainz and by the DFG -- project number 233630050 -- TRR 146 and project number 441153493 -- SPP 2256 ``Variational Methods for Predicting Complex Phenomena in Engineering Structures and Materials".

\bibliographystyle{abbrv}
\bibliography{literature.bib}

\end{document}